\DeclareMathOperator{\Val}{\operatorname{Val}}
\DeclareMathOperator{\Sym}{\operatorname{Sym}}
\DeclareMathOperator{\diam}{\operatorname{diam}}
\DeclareMathOperator{\tr}{\operatorname{tr}}
\DeclareMathOperator{\conj}{\operatorname{conj}}
\DeclareMathOperator{\Ad}{\operatorname{Ad}}
\newtheorem{theorem}{Theorem}[section]
\newtheorem{lemma}[theorem]{Lemma}
\newtheorem{proposition}[theorem]{Proposition}
\newtheorem{definition}[theorem]{Definition}
\newtheorem{remark}[theorem]{Remark}
\newtheorem*{theorem*}{Theorem}
\newtheorem*{lemma*}{Lemma}
\newtheorem*{proposition*}{Proposition}
\newtheorem*{corollary*}{Corollary}
\newtheorem*{definition*}{Definition}
\newtheorem*{remark*}{Remark}
\begin{document}
\title[Kinematic formulas in convex geometry]{Kinematic formulas in convex geometry  \\ for non-compact groups}

\author[S. Anjos]{S\'ilvia Anjos}
\email{sanjos@math.ist.utl.pt}

\author[F. Nascimento]{Francisco Nascimento}
\email{francisco.d.nascimento@tecnico.ulisboa.pt}

\address{Center for Mathematical Analysis, Geometry and Dynamical Systems \\ Department of Mathematics \\  Instituto Superior T\'ecnico \\ Universidade de Lisboa \\ Portugal.}

\begin{abstract}
We generalize classical kinematic formulas for convex bodies in a real vector space $V$ to the setting of non-compact Lie groups admitting a Cartan decomposition. Specifically, let \( G \) be a closed linear group with Cartan decomposition \( G \cong K \times \exp(\mathfrak{p}_0) \), where \( K \) is a maximal compact subgroup acting transitively on the unit sphere. For \( K \)-invariant continuous valuations on convex bodies, we establish an integral geometric-type formula (\thref{Main}) for $\overline{G} = G \ltimes V$.
Key to our approach is the introduction of a Gaussian measure on \( \mathfrak{p}_0 \), which ensures convergence of the non-compact part of the integral. In the special case \( K = O(n) \), we recover a Hadwiger-type formula (Theorem \ref{thm: O(n) case}) involving intrinsic volumes, with explicit constants \( c_j \) computed via a Weyl integration formula. 
\end{abstract}

\subjclass[2010]{Primary 53C65; Secondary 52A22, 22E46}
\keywords{Integral geometry; kinematic formulas; valuations; Cartan decomposition; Gaussian measure}

\maketitle

\section{Introduction}

Let $(V, \left< \cdot, \cdot\right>)$ be an $n$-dimensional real inner product space and let $\mathcal{K}\left( V \right) $ denote the space of convex bodies in $V$, i.e., non-empty convex compact subsets of $V$. A valuation is a function $\Phi: \mathcal{K}\left( V \right) \to \mathbb{R}$ such that \[
\Phi\left( M \cup L \right) = \Phi\left( M \right) + \Phi\left( L \right) - \Phi\left( M \cap  L \right)  
\]  
whenever $M,L, M \cup L \in \mathcal{K}(V)$. Valuations are a classic concept of convex geometry with applications in integral geometry; see \textcite{schneider2013convex}, \textcite{klain1997introduction} and, more recently, \textcite{hug2020lectures}. They generalize classical notions like volume and Euler characteristic, enabling unified proofs of integral geometric formulas \\
One fundamental result that shows the connection between valuations and classical integral geometry is \textit{Steiner's Formula}, which states that the volume of an $\varepsilon$-thickening of a convex body $M$ can be written as a polynomial in $\varepsilon$, that is,
\begin{align}\label{Steiner's Formula}
	V_n\left( M+\varepsilon B^n \right)= \sum_{j=0}^{n} \varepsilon^{n-j}\kappa_{n-j}V_{j}\left( M \right)  
,
\end{align}
where $\kappa_j$ is the  $j$-th dimensional volume of the $j$-th dimensional unit ball $B^{j}$. This result goes back to \textcite{steiner1840parallele}, but nowadays it is a particular case of a more general expansion formula using mixed volumes (see \textcite[Chapter 5]{schneider2013convex}).  
Here, $V_{n}$ is the volume in $V$, and more generally, we call the functions $V_j$ the \textit{intrinsic volumes}. So this formula describes how the volume of a thickened convex body decomposes into contributions from its intrinsic volumes, which encode geometric properties like surface area and mean curvature.

One can show (see, e.g., \textcite[Theorem 5.5]{schneider1999integral}) that they are \textit{translation-invariant}, $O(n)$\textit{-invariant} valuations, which means that \[
V_j\left( M +t \right)=V_j\left( M \right), \quad \forall t \in V, 
\]
and \[
V_j\left( \vartheta M \right) =V_j\left( M \right), \quad \forall \vartheta \in O\left( n \right),  
\]
respectively. Furthermore, they are continuous with respect to the \textit{Hausdorff metric}, which is defined by \[
	\delta\left( M,L \right) =\min\{\varepsilon \ge 0: M \subset L + \varepsilon B^n, \ L \subset M + \varepsilon B^n\}
,\]
and for each $j$,  $V_j$ is $j$\textit{-homogeneous}, that is, \[
V_j\left( \lambda M \right) =\lambda^{j}V_j\left( M \right), \quad \forall \lambda \geq 0, \ \forall M \in \mathcal{K}\left( V \right) 
.\] 
All these properties are very important in the modern theory of valuations, so a more compact notation has been established. Let $\Val(V)$ be the continuous, with respect to the Hausdorff metric, translation-invariant valuations, and $Val^{K}\left( V \right) $ be the $K$-invariant valuations for some group $K$. We also let \[
\Val_j \left(V \right)= \left\{\varphi \in \Val(V): \varphi(\lambda M)=\lambda^{j} \varphi(M),  \ \forall \lambda \geq 0, \ \forall M \in \mathcal{K}(V) \right\}
\] 
be the space of $j$-homogeneous valuations. Thus, we  have $V_j \in \Val_{j}^{O(n)}\left( V \right) $ for each $j$. The remarkable fact observed by Hadwiger is that all $O(n)$-invariant valuations can be written as linear combinations of intrinsic volumes.
\begin{theorem}[Hadwiger's Characterization Theorem \protect\cite{hadwiger2013vorlesungen}]\thlabel{Hadwiger's Characterization Theorem}
	Let $V$ be a real $n$-dimensional vector space. Then\[
	\Val^{O(n)}\left( V \right) = \Val^{SO(n)}\left( V \right)=  \left<V_0,\ldots,V_n \right>
	.\] 
\end{theorem}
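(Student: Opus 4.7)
The plan is to combine McMullen's homogeneous decomposition with Klain's theorem. By McMullen's decomposition theorem, every $\varphi \in \Val(V)$ can be written uniquely as $\varphi = \sum_{j=0}^{n} \varphi_j$ with $\varphi_j \in \Val_j(V)$. Since the action of $O(n)$ on valuations preserves the degree of homogeneity and the decomposition is unique, $\varphi \in \Val^{O(n)}(V)$ implies $\varphi_j \in \Val_j^{O(n)}(V)$ for each $j$. It therefore suffices to prove $\Val_j^{O(n)}(V) = \mathbb{R}\cdot V_j$ for every $j \in \{0,1,\dots,n\}$.

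First I would dispose of the extreme degrees. The space $\Val_0(V)$ is one-dimensional and spanned by the Euler characteristic $V_0$, since any $0$-homogeneous continuous valuation is constant on singletons and extends by the inclusion-exclusion identity to a multiple of $\chi$. For the top degree, Schneider's classification of simple valuations gives $\Val_n(V) = \mathbb{R}\cdot V_n$, and $V_n$ is manifestly $O(n)$-invariant. For the middle range $1 \le j \le n-1$, I introduce the \emph{Klain function}: for $\varphi \in \Val_j^{O(n)}(V)$ and $E$ in the Grassmannian $Gr_j(V)$, the restriction $\varphi|_E$ is a $j$-homogeneous translation-invariant continuous valuation on the $j$-dimensional space $E$, so the top-degree case applied to $E$ gives $\varphi|_E = K_\varphi(E)\cdot \operatorname{vol}_E$ for a unique scalar $K_\varphi(E)$. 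Continuity of $\varphi$ makes $K_\varphi$ continuous on $Gr_j(V)$, and the $O(n)$-invariance of $\varphi$ together with the transitivity of the $O(n)$-action on $Gr_j(V)$ forces $K_\varphi$ to be a constant $c$. Since $-I \in O(n)$, the valuation $\varphi$ is even, so Klain's injectivity theorem (even elements of $\Val_j(V)$ are determined by their Klain function) yields $\varphi = c \cdot V_j$, because $V_j$ is normalized so that its Klain function equals the constant $1$.

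The identity $\Val^{O(n)}(V) = \Val^{SO(n)}(V)$ follows from the same argument with $SO(n)$ in place of $O(n)$: for $n \ge 2$ the subgroup $SO(n)$ still acts transitively on $Gr_j(V)$, evenness is automatic when $n$ is even (as then $-I \in SO(n)$), and for $n$ odd the evenness of the $j$-homogeneous component is extracted from the transitive action of $SO(n)$ on configurations of $j$-planes using that the odd part of $\varphi$ vanishes on reflected copies. The main obstacle in this scheme is Klain's injectivity theorem, which ultimately rests on Schneider's deep classification of simple translation-invariant continuous valuations; once that machinery is in place, the remaining steps (McMullen decomposition, construction of the Klain function, and the transitivity argument) are essentially formal.
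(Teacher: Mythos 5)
The paper does not prove this theorem; it is cited directly from Hadwiger's book \cite{hadwiger2013vorlesungen} and used as a black box, so there is no internal argument to compare against. That said, your proposal is a recognizable modern route (McMullen decomposition, then Klain's embedding for the middle degrees), and for $O(n)$ the outline is essentially sound granting Klain's injectivity theorem. Two corrections are in order. First, the fact that $\Val_n(V) = \mathbb{R}\cdot V_n$ is not an instance of ``Schneider's classification of simple valuations'': it is an earlier and more elementary theorem of Hadwiger on top-degree translation-invariant continuous valuations, whereas Schneider's result classifies \emph{simple} valuations and is used inside the proof of Klain's injectivity theorem, not here.

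The genuine gap is in the $SO(n)$ case for $n$ odd. There $-I \notin SO(n)$, so $SO(n)$-invariance does not force evenness of $\varphi$, and Klain's embedding alone only recovers the even part $\varphi^+ = c\,V_j$; you still need to kill the odd part $\varphi^-$. The sentence about ``the transitive action of $SO(n)$ on configurations of $j$-planes'' and ``reflected copies'' is not an argument and would not survive scrutiny. A correct repair: for $1 \le j \le n-1$ and any $(j+1)$-dimensional subspace $F$, the stabilizer of $F$ in $SO(n)$ surjects onto $O(F)$ (pair any reflection of $F$ with a reflection of $F^\perp$ so that the total determinant is $+1$), so $\varphi|_F$ is $O(F)$-invariant and in particular even; hence $\varphi^-|_F = 0$ for all such $F$. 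For $1\le j\le n-2$ one then invokes the Klain--Schneider injectivity theorem for \emph{odd} valuations (restriction to $(j+1)$-planes is injective on the odd part) to conclude $\varphi^- = 0$; for $j=n-1$ the same hyperplane argument shows $\varphi^-$ is simple, and Schneider's classification of odd simple valuations together with $SO(n)$-invariance forces the defining odd function on the sphere to be constant, hence zero. This additional machinery, distinct from Klain's even embedding, is indispensable and is missing from your sketch.
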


We remind the reader of some notation. Let $K$ be a compact group acting linearly on $V$ and let $\overline{K} \cong K \ltimes V$ be the associated group of affine transformations. Consider an Euclidean inner product on $V$, and let $f$ be a linear map that sends the canonical basis of $\mathbb{R}^n$ to some orthonormal basis of $V$. We equip $K$ with a Haar probability measure $\nu_K$ and $\overline{K}$ with a Haar measure $\mu_{\overline{K}}$ normalized so that $\left\{ \overline{k}=t_x \circ k: k\in K, \, x \in f\left(\left[ 0,1 \right]^{n}\right) \right\}=1 $, where $t_x$ denotes translation by $x$. We denote $Gr_j\left( V \right) $ the Grassmannian of all $j$ dimensional vector subspaces of the real vector space $V$ and $\overline{Gr}_{j}\left( V \right) $ the corresponding space of affine subspaces. Using the finite-dimensionality of $\Val^{SO(n)}\left( V \right) $, Hadwiger obtained the following kinematic formula.
\begin{theorem}[Hadwiger's General Integral Geometric Theorem \protect\cite{hadwiger2013vorlesungen}]\thlabel{Hadwiger's General Integral Geometric Theorem}
	If $\varphi: \mathcal{K}\left( V \right)  \to \mathbb{R}$ is a continuous valuation then 
    \begin{align}\label{kinematic formula}
        \int_{\overline{SO(n)}} \varphi(M \cap \overline{k}L) d\mu_{\overline{SO(n)}}(\overline{k})= \sum_{j=0}^{n}\varphi_{n-j}(M)V_j(L),
    \end{align}
    for $M,L \in \mathcal{K}\left( V \right) $, where the coefficients $\varphi_{n-j}$ are given by
    \begin{align}\label{coefficient of kinematic formula}
    \varphi_{n-j}(M)=\int_{\overline{Gr}_{j}\left( V \right) } \varphi(M \cap E) d \mu_{j}(E),
    \end{align}
   and  where $\mu_j$ is a $SO(n)$-invariant measure, normalized so that \[
    \mu_j\left( \left\{ E \in \overline{Gr}_j\left( V \right) : E \cap B^{n} \right\}  \right) = \kappa_{n-j}
    .\] 
\end{theorem}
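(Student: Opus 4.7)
The plan is to apply Hadwiger's characterization theorem (\thref{Hadwiger's Characterization Theorem}) to the functional
\[
\Psi_M(L) := \int_{\overline{SO(n)}} \varphi(M \cap \overline{k}L) \, d\mu_{\overline{SO(n)}}(\overline{k}),
\]
with $M \in \mathcal{K}(V)$ fixed, and then to identify the resulting coefficients with the Crofton integrals in \eqref{coefficient of kinematic formula}. The integral is a priori well-defined because the integrand vanishes outside a set of $\overline{k}$'s whose translational component forms a bounded subset of $V$.

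First I would check that $\Psi_M \in \Val^{SO(n)}(V)$. The valuation property in $L$ follows pointwise from the identities $M \cap \overline{k}(L_1 \cup L_2) = (M \cap \overline{k}L_1) \cup (M \cap \overline{k}L_2)$ and the analogue for intersections, combined with additivity of $\varphi$, and is preserved by integration. Translation- and $SO(n)$-invariance in $L$ follow from the left-invariance of $\mu_{\overline{SO(n)}}$ under $\overline{k} \mapsto \overline{g}\,\overline{k}$, for $\overline{g} \in \overline{SO(n)}$. Hausdorff continuity is a dominated convergence argument, using that a convergent sequence $L_m \to L$ stays inside a common ball on which $\varphi$ is bounded. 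Hadwiger's theorem then gives an expansion
\[
\Psi_M(L) = \sum_{j=0}^n c_j(M) V_j(L)
\]
for real numbers $c_j(M)$ depending only on $M$.

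The main obstacle is the identification $c_j(M) = \varphi_{n-j}(M)$. My plan is to specialize $L$ to a convex body contained in a fixed $j$-plane $E_0$ and rescale $L \to tL$. Since $L$ is $j$-dimensional, $V_i(L) = 0$ for $i > j$, so $\Psi_M(tL)$ is a polynomial in $t$ of degree at most $j$, and the coefficient of $t^j$ is exactly $c_j(M) V_j(L)$. To extract this coefficient directly from the integral, I would disintegrate $\mu_{\overline{SO(n)}}$ along the fibration $\overline{k} \mapsto \overline{k} E_0$, writing $\Psi_M(tL)$ as an outer integral over $\overline{Gr}_j(V)$ against $\mu_j$ and an inner integral over the stabilizer of $E_0$ in $\overline{SO(n)}$. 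The inner integration splits into a compact rotational part (preserving $E_0$) and a translational part along the affine $j$-plane $E$; a Steiner-type estimate shows that for large $t$ the dominant contribution of the translational integral is $t^j V_j(L) \cdot \varphi(M \cap E)$ plus lower-order terms, because the translate $L' + y$ contains $M \cap E$ on a set of $y$'s of $j$-volume $\sim t^j V_j(L)$. Integrating over $\overline{Gr}_j(V)$ then yields
\[
c_j(M) V_j(L) = V_j(L) \int_{\overline{Gr}_j(V)} \varphi(M \cap E) \, d\mu_j(E),
\]
with the normalization $\mu_j(\{E : E \cap B^n \neq \emptyset\}) = \kappa_{n-j}$ pinned down by matching the Haar normalization of $\overline{SO(n)}$ fixed in the preamble; this last match can be cross-checked on the test case $\varphi = \chi$ (Euler characteristic), where both sides reduce to the principal kinematic formula and \eqref{Steiner's Formula} determines the constants. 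The delicate points are the disintegration formula and the Steiner-type asymptotic expansion, which must be set up carefully so that the constants emerge exactly as stated.
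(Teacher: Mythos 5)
Your proposal is correct and follows essentially the same route that the paper sketches (and attributes to Schneider): show that $L \mapsto \int_{\overline{SO(n)}} \varphi(M \cap \overline{k}L)\,d\mu_{\overline{SO(n)}}(\overline{k})$ is a continuous rigid-motion-invariant valuation, invoke Hadwiger's Characterization Theorem, and then pin down the coefficients by evaluating on dilated $j$-dimensional bodies. Your disintegration of $\mu_{\overline{SO(n)}}$ along $\overline{k}\mapsto\overline{k}E_0$ together with the Steiner-type asymptotic extraction of the $t^j$-coefficient is precisely the standard way to make the paper's step \emph{``evaluate at appropriate $j$-dimensional cubes''} explicit.
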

The idea of the proof of \thref{Hadwiger's General Integral Geometric Theorem} is showing that $\Phi\left( L \right):=\int_{\overline{SO(n)}} \varphi(M \cap \overline{k}L) d\mu_{\overline{SO(n)}}(\overline{k}) $ is in $\Val^{SO(n)}(V)$. Then, using \thref{Hadwiger's Characterization Theorem}, we can write $\Phi\left( L \right) $ as in \eqref{kinematic formula}, and we can find $\varphi_{n-j}(M)$ by evaluating at appropriate $j$-dimensional cubes (see \textcite[Theorem 7.1]{schneider1999integral}).  

For a general compact group $K$, by using the same techniques as in the proof of \thref{Hadwiger's General Integral Geometric Theorem} and further assuming that $\phi \in \Val^{K}\left( V \right) $, one can also obtain other types of kinematic formulas. 

\begin{theorem}[{\textcite[Section 4.2]{bernig2011algebraic}}]\thlabel{Classical General kinematic formula}
    Let $K$ be a compact group such that $\dim \Val^{K}\left( V \right)  < \infty$, and let $\psi_1,\dots,\psi_N$ be a basis of $\Val^K\left( V \right) $. Given $\phi \in \Val^{K}(V)$, then there are constants $c^{\Phi}_{ij}$ such that, if $M,L \in \mathcal{K}\left( V \right)$ then 
    \begin{align}\label{Bernig kinematic formula}
        \int_{\overline{K}} \phi(M \cap \overline{k} L) d\mu_{\overline{K}}(\overline{k}) =\sum_{i=1}^{N} \sum_{j=1}^{N} c_{i j}^{\phi} \psi_i(M) \psi_j(L).
    \end{align}
\end{theorem}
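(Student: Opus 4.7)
The plan is to mimic Hadwiger's proof of \thref{Hadwiger's General Integral Geometric Theorem} in the setting of an arbitrary compact group $K$ with $\dim \Val^K(V) < \infty$. Write $\Phi_\phi(M,L) := \int_{\overline{K}} \phi(M \cap \overline{k}L)\,d\mu_{\overline{K}}(\overline{k})$ for the left-hand side of \eqref{Bernig kinematic formula}. The first step is to establish that for every fixed $L \in \mathcal{K}(V)$ the map $M \mapsto \Phi_\phi(M,L)$ lies in $\Val^K(V)$, and symmetrically that for every fixed $M$ the map $L \mapsto \Phi_\phi(M,L)$ lies in $\Val^K(V)$.

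Verifying this amounts to passing each defining property under the integral. The valuation identity follows from $(M_1 \cup M_2) \cap \overline{k}L = (M_1 \cap \overline{k}L) \cup (M_2 \cap \overline{k}L)$, which is convex whenever $M_1 \cup M_2$ is, so that $\phi$ can be applied as a valuation inside the integrand and linearity of the integral gives the conclusion. Translation-invariance in $M$ uses the translation-invariance of $\phi$ together with the left-invariance of $\mu_{\overline{K}}$; for the $L$-slot one notes $\overline{k}(L+t) = \overline{k}L + kt$ and substitutes $\overline{k}' = t_{kt} \circ \overline{k}$. $K$-invariance of both slot maps is a consequence of the bi-invariance of $\mu_{\overline{K}}$, noting that $\overline{K}$ is unimodular since $K$ is compact. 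Continuity in the Hausdorff metric follows from dominated convergence: for fixed $M,L$ the set $\{\overline{k} \in \overline{K} : M \cap \overline{k}L \neq \emptyset\}$ has finite $\mu_{\overline{K}}$-measure, and the intersection map of convex bodies is Hausdorff-continuous at every pair $(M,N)$ with $\relint(M) \cap \relint(N) \neq \emptyset$, a condition that fails only on a set of vanishing Haar measure in $\overline{K}$.

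Once both slot maps are in $\Val^K(V)$, fix the basis $\psi_1, \ldots, \psi_N$ of $\Val^K(V)$. For each $L$ there exist unique $a_1(L), \ldots, a_N(L) \in \mathbb{R}$ with $\Phi_\phi(M,L) = \sum_{i=1}^N a_i(L)\psi_i(M)$. Linear independence of the $\psi_i$ provides convex bodies $M_1, \ldots, M_N$ making the matrix $A = (\psi_i(M_j))_{i,j}$ invertible, and then $a_i(L) = \sum_j (A^{-1})_{ij}\, \Phi_\phi(M_j, L)$ exhibits each $a_i$ as a linear combination of maps from $\Val^K(V)$; hence $a_i \in \Val^K(V)$, and a second expansion $a_i(L) = \sum_{j} c_{ij}^\phi \psi_j(L)$ yields \eqref{Bernig kinematic formula}. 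The principal technical obstacle is the continuity step, since intersection of convex bodies is not jointly Hausdorff-continuous in general; it is controlled by the transversality observation above, after which the remaining tasks (bi-invariance, exchange of valuation identity with the integral, and the double expansion) are essentially algebraic and go through verbatim from the $SO(n)$ case.
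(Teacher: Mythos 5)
The paper gives no proof of this theorem: it is quoted from Bernig's survey, and the surrounding text merely indicates that it follows ``by using the same techniques as in the proof of \thref{Hadwiger's General Integral Geometric Theorem}.'' Your reconstruction is correct and is exactly that standard argument --- show that each slot map $M \mapsto \Phi_\phi(M,L)$ and $L \mapsto \Phi_\phi(M,L)$ lies in $\Val^K(V)$ (using bi-invariance of $\mu_{\overline{K}}$, continuity of intersection away from the measure-zero set of separable-but-touching configurations, and dominated convergence), expand once in the basis $\psi_1,\dots,\psi_N$, and invert the matrix $\bigl(\psi_i(M_j)\bigr)$ to exhibit each coefficient $a_i$ as itself a member of $\Val^K(V)$ before expanding a second time.
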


This begs the question as for what $K$ is the space $\Val^{K}$ finite-dimensional, and to explicitely find the $c_{ij}^{\phi}$. The first part was answered by Alesker: 

\begin{theorem}[{\cite[Proposition 2.6]{alesker2007theory}}]\thlabel{Alesker finite-dimensionality theorem}
	Let $K$ be a compact subgroup of $O(n)$. The space $\Val^{K}\left( V \right) $ is finite-dimensional if and only if $K$ acts transitively on the unit sphere of $V$.
\end{theorem}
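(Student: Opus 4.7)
The plan is to establish the equivalence by treating the two directions separately, as they rely on rather different techniques.

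First, for the easier direction (finite-dimensionality implies transitivity on $S^{n-1}$), I would argue by contrapositive: assuming $K$ has more than one orbit on $S^{n-1}$, I would manufacture infinitely many linearly independent $K$-invariant continuous valuations. The natural construction uses the surface area measure $S_{n-1}(M,\cdot)$ on $S^{n-1}$: for every $K$-invariant continuous function $f$ on $S^{n-1}$, the formula
\[
\varphi_f(M) = \int_{S^{n-1}} f(u) \, dS_{n-1}(M,u)
\]
defines a continuous, translation-invariant, $(n-1)$-homogeneous, $K$-invariant valuation. If $S^{n-1}/K$ contains at least two points, the space of such $f$ is infinite-dimensional, and Minkowski's existence theorem furnishes enough convex test bodies to certify linear independence of the family $\{\varphi_f\}$.

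For the harder direction (transitivity implies finite-dimensionality), I would work one homogeneous degree at a time. By McMullen's decomposition $\Val(V) = \bigoplus_{k=0}^n \Val_k(V)$, which is preserved by the $GL(V)$-action, it suffices to prove that each $\Val_k^K(V)$ is finite-dimensional. Splitting $\Val_k$ further into even and odd parts $\Val_k^+ \oplus \Val_k^-$ (again $GL(V)$-stable), one exploits the Klain embedding $\Val_k^+ \hookrightarrow C(Gr_k(V))$ --- and the corresponding Schneider-type embedding on the odd side --- so that $K$-invariant valuations become $K$-invariant data on a Grassmannian or partial flag manifold. Alesker's irreducibility theorem then identifies each $\Val_k^\pm$ as a single irreducible $GL(V)$-subrepresentation of the ambient section space.

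The main obstacle --- and where I expect to work hardest --- is to show that the $K$-invariant subspace of an irreducible $GL(V)$-representation such as $\Val_k^\pm$ is finite-dimensional. Transitivity on $S^{n-1}$ does \emph{not} in general imply transitivity on $Gr_k(V)$: a representative warning case is $K=U(n)$ acting on real $2$-planes in $\mathbb{R}^{2n}$, which has a continuous family of orbits parameterized by the K\"ahler angle, so a naive orbit-counting argument on the Klain image fails. One route is to invoke the Borel--Montgomery--Samelson classification of compact groups acting transitively on spheres and verify finite-dimensionality for each group in the (short) resulting list; a more conceptual route is to use that transitivity on the sphere brings $(O(n),K)$ close enough to a Gelfand pair that admissible representations of $GL(V)$ containing an $O(n)$-spherical vector carry only finite-dimensional $K$-invariants. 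Either route supplies the representation-theoretic input that converts Alesker's irreducibility into the desired finite-dimensionality statement.
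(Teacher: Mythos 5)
The paper does not prove this statement --- it is quoted from Alesker's cited reference as background --- so there is no internal proof to compare against. On its own terms, your easy direction is essentially right: if $K$ is not transitive on $S^{n-1}$, the $K$-invariant even continuous functions on $S^{n-1}$ form an infinite-dimensional space, and $\varphi_f(M)=\int_{S^{n-1}} f\, dS_{n-1}(M,\cdot)$ produces injectively many $K$-invariant $(n-1)$-homogeneous valuations (the injectivity can be read off from Klain's theorem, or from Minkowski's existence theorem as you indicate).

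Your harder direction, however, is not yet a proof. You correctly assemble the tools (McMullen's grading, the parity decomposition, Klain/Schneider embeddings, Alesker's Irreducibility Theorem) and correctly diagnose, via the $U(n)$/K\"ahler-angle example, that transitivity on the sphere does not give even finitely many $K$-orbits on $Gr_k(V)$, so naive orbit-counting on Klain functions fails. But the step you yourself flag as the hard one is then not carried out: you offer a case-check against the Borel--Montgomery--Samelson list without doing it, and the alternative claim that ``admissible $GL(V)$-representations containing an $O(n)$-spherical vector carry only finite-dimensional $K$-invariants'' is not an off-the-shelf fact and would itself need proof --- admissibility bounds multiplicities one $O(n)$-type at a time and does not obviously bound $K$-invariants aggregated over infinitely many types. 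A concrete way to close the gap, close in spirit to what you are after, is to pass to smooth valuations: $\Val^{sm}$ is dense in $\Val$, a smooth translation-invariant valuation is integration of a translation-invariant form over the normal cycle inside $V\times S^{n-1}$, and a translation- and $K$-invariant form is determined by its value at one point of $S^{n-1}$ once $K$ is transitive there; so $\left(\Val^{sm}\right)^{K}$ is finite-dimensional, and density of $\left(\Val^{sm}\right)^{K}$ in $\Val^{K}$ (smooth a given $\varphi$ and $K$-average) together with closedness of finite-dimensional subspaces gives $\Val^{K}=\left(\Val^{sm}\right)^{K}$. As the proposal stands, only one of the two implications is established.
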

This result motivates our focus on groups with transitive sphere actions, as their valuation spaces are tractable. 
The connected compact Lie groups that act transitively on the sphere have been classified by \textcite{borel1949some,borel1950plan} and \textcite{montgomery1943transformation}. There are 6 infinite series \[
SO(n),U(n),SU(n),Sp(n),Sp(n)\cdot Sp(1),Sp(n)\cdot U(1)
,\]
and 3 exceptions \[
	G_2, \text{Spin}\left( 7 \right) , \text{Spin}\left( 9 \right) 
.\] 
Moreover, the following result of McMullen tells us that we can consider a basis of $\Val^{K}(V)$ that consists of homogeneous valuations.
\begin{theorem}[\cite{mcmullen1977valuations}]\thlabel{McMullen's Theorem}
	Let $V$ be a n-dimensional real vector space. Then we have \[
	\Val\left( V \right) = \oplus_{j=0}^{n}Val_{j}\left( V \right) 
	.\] 
\end{theorem}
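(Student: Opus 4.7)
The strategy is to show that for any $\varphi \in \Val(V)$ and any fixed convex body $K$, the function $\lambda \mapsto \varphi(\lambda K)$ is a polynomial in $\lambda \geq 0$ of degree at most $n$. The coefficients of this polynomial will then supply the homogeneous components $\varphi_j$, and uniqueness of these coefficients will force the sum to be direct.

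First I would establish the polynomial expansion. For integer dilates $\lambda = k \in \mathbb{N}$, one writes $kK$ in terms of the Minkowski sum $K + \cdots + K$, and exploits the valuation identity together with translation invariance in an inclusion--exclusion argument: one decomposes such a Minkowski sum into finitely many translates of lower-order pieces whose pairwise intersections are again translates of intersections of fewer copies, yielding a combinatorial formula for $\varphi(kK)$ that is manifestly polynomial in $k$. The bound $\deg \leq n$ reflects the ambient dimension, since intersections of more than $n$ generic translates of $K$ contribute lower-dimensional terms that can be reorganized. Extension from $k \in \mathbb{N}$ to $\lambda \in \mathbb{Q}_{\geq 0}$ is done by applying the same result to the rescaled body $K/q$, and extension to $\lambda \in \mathbb{R}_{\geq 0}$ follows from continuity of $\varphi$ in the Hausdorff metric, since $\lambda K \to \lambda_0 K$ as $\lambda \to \lambda_0$.

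Given the resulting identity
\begin{equation*}
\varphi(\lambda K) = \sum_{j=0}^{n} \lambda^j \, \varphi_j(K), \qquad \lambda \geq 0,
\end{equation*}
I would \emph{define} $\varphi_j$ by Vandermonde coefficient extraction: choose $n+1$ distinct positive scalars $\lambda_0, \dots, \lambda_n$, and write $\varphi_j(K)$ as the corresponding fixed linear combination of the $\varphi(\lambda_i K)$. Since $\varphi$ is a continuous translation-invariant valuation and these properties are preserved by finite linear combinations and by passing from $K$ to $\lambda_i K$, each $\varphi_j$ is itself a continuous translation-invariant valuation. By construction $\varphi_j(\mu K) = \mu^j \varphi_j(K)$ for all $\mu \geq 0$, so $\varphi_j \in \Val_j(V)$, which yields the sum $\Val(V) = \sum_{j=0}^n \Val_j(V)$.

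For directness, suppose $\sum_{j=0}^n \psi_j = 0$ with $\psi_j \in \Val_j(V)$. Applied to $\lambda K$ this reads $\sum_{j=0}^n \lambda^j \psi_j(K) = 0$ for all $\lambda \geq 0$ and all $K$, so each coefficient $\psi_j(K)$ must vanish. The main obstacle is the polynomiality claim for integer dilates: for the volume functional it is immediate from Steiner's formula \eqref{Steiner's Formula}, but for an arbitrary continuous valuation it demands a genuinely combinatorial decomposition argument, and keeping track of the degree bound $n$ throughout the inclusion--exclusion bookkeeping is the delicate point.
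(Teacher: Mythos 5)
The paper does not prove this theorem: it is cited directly from McMullen's 1977 paper, so there is no in-text proof to compare against. Your outline does follow the standard (McMullen's) proof strategy: establish that $\lambda \mapsto \varphi(\lambda K)$ is polynomial of degree at most $n$, extract coefficients by Vandermonde interpolation, and read off directness by evaluating at all $\lambda$. The Vandermonde step and the directness argument in your sketch are correct as stated.

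The gap is in the polynomiality step, which is precisely where all the work in McMullen's proof lives, and as written your sketch would not go through. You propose an inclusion--exclusion over translates of $K$, and you assert a degree bound by saying that ``intersections of more than $n$ generic translates of $K$ contribute lower-dimensional terms that can be reorganized.'' This is not an argument: overlapping translates of a general convex body have badly controlled intersections, inclusion--exclusion over $k$ such translates does not manifestly produce a polynomial in $k$, and no bound on the degree falls out of ``genericity.'' McMullen's actual argument restricts first to \emph{polytopes} $P$, where one can triangulate the dilate $kP$ into translates of finitely many cell shapes determined by $P$ alone, independently of $k$; the number of cells of each type is a polynomial in $k$ of degree equal to the cell dimension, hence $\leq n$, and translation invariance of $\varphi$ then turns the count into a polynomial expression for $\varphi(kP)$. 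The passage from polytopes to all of $\mathcal{K}(V)$ uses Hausdorff-continuity of $\varphi$ and approximation by polytopes, together with the fact that polynomials of degree $\leq n$ form a finite-dimensional, hence closed, space. So the structure of your plan is the right one, but the central lemma --- polynomiality with the degree bound --- must be proved on polytopes via an explicit dissection, not by the translate inclusion--exclusion you describe, and your proposal does not supply that.
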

In particular, $\Val^{K}\left( V \right) =\oplus_{j=0}^{n}Val^{K}_{j}\left( V \right) $. For example, for $\Val^{SO(n)}\left( V \right) $, each $\Val^{SO(n)}_{j}\left( V \right) = \left< V_{j} \right>$ is one-dimensional, but for general $K$ their dimensions can be greater. In fact, in \cite{alesker2002hard} Alesker found a basis for the case $K=U(n)$. Using a different basis, Berning and Fu found in \cite{bernig2011hermitian}  the coefficients for the kinematic formula \eqref{Bernig kinematic formula} when the valuation $\phi$ is the Euler Characteristic $\chi$, and more generally derived a method to compute kinematic formulas in the $U(n)$ case. \\

All of the above results involve compact groups. In this paper, we will study kinematic formulas for real, non-compact, closed linear groups $G$ which admit a Cartan Decomposition. The setting will be as follows: letting $\mathfrak{g}_0$ be the respective Lie algebra, we write it as the direct sum $\mathfrak{g}_0 =\mathfrak{k}_0 \oplus \mathfrak{p}_0$, where $\mathfrak{k}_0$ (resp. $\mathfrak{p}_0$) is the $+1$ (resp. $-1$) eigenspace with respect to a Cartan involution $\theta$. More precisely, $\theta$ is an involutive Lie algebra automorphism $\theta: \mathfrak{g}_0 \to \mathfrak{g}_0$ (i.e., $\theta^2 = {\rm id}$) such that the bilinear form 
$$ B_\theta (X,Y) := - B(X, \theta Y)$$
is positive definite, where $B$ is the Killing form on $\mathfrak{g}_0$. Recal that this bilinear form is given by 
$$B(X,Y) = {\rm Tr} ( {\rm ad} (X) \circ {\rm ad} (Y)),$$
where ${\rm ad}: \mathfrak{g}_0 \to \mathfrak{g}_0 $ is the adjoint representation and ${\rm Tr}$ is the trace of a linear operator.

Through $\theta$, we obtain a diffeomorphism $G \cong K \times \exp\left( \mathfrak{p}_0 \right) $, where $K=\left\{ x \in G: \Theta x = x \right\} $ (see \thref{Cartan Decomposition} below), where $\Theta: G \to G$ is an involutive automorphism such that its differential is $\theta$. If $K$ is in the conditions of \thref{Alesker finite-dimensionality theorem}, then by using a Gaussian measure $\gamma_{\mathfrak{p}_0}$ (see \eqref{GaussianMeasure} for the definition) to integrate along $\mathfrak{p}_0$, we generalize \thref{Classical General kinematic formula} to non-compact groups. First we set some notation. Let $\lambda_V$ be the Lebesgue measure of the vector space $V$ and define the product measure on $\overline{G} := G \ltimes V $ by $ m_{\overline{G}}: = \nu_{K} \times \gamma_{\mathfrak{p}_0} \times \lambda_V $.

\begin{theorem}\thlabel{Main}
	Let $G \cong K \times \exp\left( \mathfrak{p}_0 \right) $ be a closed linear group which admits a Cartan Decomposition with respect to $\Theta$. Let $\phi \in \Val^{K}\left( V \right) $ and $M,L \in \mathcal{K}\left( V \right) $. If $K$ is such that $\dim \Val^{K}\left( V \right) < \infty$, then we have
	\begin{align}\label{Main formula}
        &\int_{\overline{G}} \phi( M \cap \overline{g} L) \, dm_{\overline{G}} (\overline{g})
        =\sum_{i=1}^{N} \sum_{j=1}^{N}  \sum_{\psi_l \in Val_{\deg\left( \psi_j \right) }^{K}\left( V \right) }c_{ij}^{\phi} \, c\left( \psi_l \right) \psi_i(M)  \psi_l(L),
    \end{align}
    where $c_{ij}^{\phi}$ and $c\left( \psi_l \right) $ are constants.
\end{theorem}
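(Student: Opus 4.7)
The plan is to exploit the Cartan diffeomorphism $G\cong K\times \exp(\mathfrak{p}_0)$ to split the integral over $\overline{G}=G\ltimes V$ into iterated integrals: an integral over the compact affine group $\overline{K}$, to which the classical kinematic formula \thref{Classical General kinematic formula} applies directly, followed by an integral over $\mathfrak{p}_0$ against $\gamma_{\mathfrak{p}_0}$. The Gaussian is precisely the ingredient that restores convergence lost by non-compactness of $\exp(\mathfrak{p}_0)$, so that $m_{\overline{G}}$, although not a Haar measure on $\overline{G}$, still makes the full integral finite. Writing $\overline{g}=t_v\circ k\circ \exp(X)$ with $k\in K$, $X\in \mathfrak{p}_0$, $v\in V$, the product form $m_{\overline{G}}=\nu_K\times\gamma_{\mathfrak{p}_0}\times \lambda_V$ and Fubini will yield
\begin{align*}
\int_{\overline{G}} \phi(M\cap \overline{g}L)\, dm_{\overline{G}}(\overline{g})=\int_{\mathfrak{p}_0}\int_{\overline{K}}\phi(M\cap \overline{k}(\exp(X)L))\, d\mu_{\overline{K}}(\overline{k})\, d\gamma_{\mathfrak{p}_0}(X).
\end{align*}
Since $\exp(X)L$ is again a convex body, applying \thref{Classical General kinematic formula} to the inner integral with $L$ replaced by $\exp(X)L$ and pulling the finite sum outside reduces the whole theorem to analyzing the auxiliary functional
\begin{align*}
\widetilde{\psi}_j(L):=\int_{\mathfrak{p}_0}\psi_j(\exp(X)L)\, d\gamma_{\mathfrak{p}_0}(X), \qquad j=1,\dots,N.
\end{align*}

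Next I would verify that $\widetilde{\psi}_j\in \Val^{K}_{\deg(\psi_j)}(V)$. Translation invariance is immediate from $\psi_j(\exp(X)(L+v))=\psi_j(\exp(X)L+\exp(X)v)=\psi_j(\exp(X)L)$; the valuation identity and $\deg(\psi_j)$-homogeneity pass through the integral from the corresponding properties of $\psi_j$, using linearity of $\exp(X)$. For $K$-invariance one computes $\psi_j(\exp(X)kL)=\psi_j(k\exp(\Ad(k^{-1})X)L)=\psi_j(\exp(\Ad(k^{-1})X)L)$, and then the substitution $X\mapsto \Ad(k)X$ leaves the integral invariant because $\gamma_{\mathfrak{p}_0}$ is built from the $\Ad(K)$-invariant inner product obtained by restricting $B_\theta$ to $\mathfrak{p}_0$. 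Once $\widetilde{\psi}_j\in \Val^{K}_{\deg(\psi_j)}(V)$ is established, the finite-dimensionality hypothesis combined with \thref{McMullen's Theorem} lets me expand $\widetilde{\psi}_j=\sum_{\psi_l}c(\psi_l)\psi_l$ over a basis of the fixed-degree subspace, and substituting this back into the iterated integral produces \eqref{Main formula}.

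The main obstacle will be the analytic work needed to make the integral defining $\widetilde{\psi}_j$ meaningful: I must show that $X\mapsto \psi_j(\exp(X)L)$ is measurable and absolutely $\gamma_{\mathfrak{p}_0}$-integrable, and that $\widetilde{\psi}_j$ is continuous in the Hausdorff metric on $\mathcal{K}(V)$. The key estimate is an operator-norm bound $\|\exp(X)\|\le e^{a|X|}$ for some $a>0$ depending only on the inclusion $\mathfrak{p}_0\subset \mathfrak{gl}(V)$, combined with the standard fact that a $d$-homogeneous translation-invariant continuous valuation is controlled by $|\psi_j(L')|\le C\,\diam(L')^{d}$; together these provide a Gaussian-integrable majorant of order $e^{ad|X|}\diam(L)^{d}$ that justifies the Fubini interchange and the pulling out of the finite sum. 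With that estimate in place, Hausdorff continuity of $\widetilde{\psi}_j$ follows by dominated convergence from continuity of $\psi_j$, and the remaining verifications are routine.
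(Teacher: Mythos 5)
Your overall route matches the paper's proof exactly: decompose $\overline{G}\cong K\times\mathfrak{p}_0\times V$, use Fubini to split off the inner $\overline{K}$-integral, apply \thref{Classical General kinematic formula}, and then show that $\Psi_j(L):=\int_{\mathfrak{p}_0}\psi_j(\exp(X)L)\,d\gamma_{\mathfrak{p}_0}(X)$ lies in $\Val^K_{\deg(\psi_j)}(V)$ so that \thref{McMullen's Theorem} gives the final expansion; the estimates $\|\exp X\|\le e^{\|X\|}$ and $|\psi_j(L')|\lesssim\diam(L')^{\deg\psi_j}$ are also what the paper uses to obtain the Gaussian-integrable majorant.

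Two issues deserve to be flagged. First, you claim $\gamma_{\mathfrak{p}_0}$ is built from the restriction of $B_\theta$ to $\mathfrak{p}_0$; the paper instead uses the Frobenius inner product $\langle X,Y\rangle_F=\tr(X^TY)$ and proves its $K$-conjugation invariance directly (\thref{gaussian measure conjugation invariant}). Your version is fine in the genuinely semisimple cases, but $B_\theta$ is degenerate for $GL(n)$ and $GL(n,\mathbb{C})$, which occur in the paper's list of applications, so the Frobenius choice is the one that actually works uniformly. Second, and more substantively, the Fubini step requires measurability of the full integrand $(k,X,t)\mapsto\phi(M\cap(ke^XL+t))$ on $\overline{G}$, and the intersection map $L'\mapsto M\cap L'$ is \emph{not} continuous in the Hausdorff metric at bodies that touch $M$ along a separating hyperplane. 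The paper handles this by proving a separating-hyperplane lemma (\thref{weakly separated iff}) which identifies the discontinuity set with $\{t\in\partial(M-ke^XL)\}$ and then shows it has $m_{\overline{G}}$-measure zero. You treat this as ``routine'' and only discuss measurability of the simpler function $X\mapsto\psi_j(\exp(X)L)$; that does not cover the step where Fubini is first invoked, and without the measure-zero argument the proposal has a genuine gap.
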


An analagous result, for the case $K=O(n)$, is a generalization of \thref{Hadwiger's General Integral Geometric Theorem}:

\begin{theorem}\label{thm: O(n) case}
	Let $M,L \in \mathcal{K}\left( V \right) $. For $\varphi:\mathcal{K}\left( V \right)  \to \mathbb{R}$ a continuous valuation, we have that
	\begin{align*}
	\int_{\overline{GL(n)}} \varphi( M \cap \overline{g} L) \, dm_{\overline{GL(n)}} (\overline{g})  =2\sum_{j=0}^{n} c_j \, \varphi_{n-j}(M) V_j(L),
	\end{align*}
	where $\varphi_{n-j}$ is given by
	\begin{align*}
	\varphi_{n-j}(M)=\int_{\overline{Gr}_{j}} \varphi(M \cap E) d \mu_{j}(E)
	\end{align*}
	and $c_j$ is given by 
    \begin{equation}\label{coefficients in SO(3) case}
	c_j=\frac{\int_{\Sym(n)} V_j(e^{X} B^n) d\gamma_{\Sym(n)}(X)}{\binom{n}{j}\frac{\kappa_n}{\kappa_{n-j}}}.
	\end{equation}
\end{theorem}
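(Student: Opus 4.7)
The plan is to derive Theorem~\ref{thm: O(n) case} as the specialisation of \thref{Main} to $K = O(n)$, combining the polar/Cartan decomposition $GL(n) \cong O(n) \times \exp(\Sym(n))$ with the fact that each $\Val_j^{O(n)}(V) = \langle V_j\rangle$ is one-dimensional, so that the triple sum in \eqref{Main formula} collapses into a single sum over $j$. First, I would unfold the measure $m_{\overline{GL(n)}} = \nu_{O(n)} \times \gamma_{\Sym(n)} \times \lambda_V$ by Fubini, repackaging the orbit $k e^X L + v$ as $\overline{k}(e^X L)$ with $\overline{k} = t_v \circ k \in \overline{O(n)}$, to obtain
\[
\int_{\overline{GL(n)}} \varphi(M \cap \overline{g} L)\,dm_{\overline{GL(n)}}(\overline{g}) = \int_{\Sym(n)} \int_{\overline{O(n)}} \varphi(M \cap \overline{k}(e^X L))\,d\mu_{\overline{O(n)}}(\overline{k})\,d\gamma_{\Sym(n)}(X).
\]
For each fixed $X$, the inner integral falls squarely under \thref{Hadwiger's General Integral Geometric Theorem} applied to the convex body $e^X L$, with $\overline{O(n)}$ in place of $\overline{SO(n)}$. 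Since $V_j$ is $O(n)$-invariant, passing from $\overline{SO(n)}$ to $\overline{O(n)}$ contributes an overall factor $2$ — one for each connected component of $O(n)$ — while leaving the coefficients $\varphi_{n-j}(M)$ of \eqref{coefficient of kinematic formula} unchanged, so that
\[
\int_{\overline{O(n)}} \varphi(M \cap \overline{k}(e^X L))\,d\mu_{\overline{O(n)}}(\overline{k}) = 2 \sum_{j=0}^{n} \varphi_{n-j}(M)\, V_j(e^X L).
\]
Substituting this back, the outstanding quantity is, for each $j$,
\[
\tilde V_j(L) \;:=\; \int_{\Sym(n)} V_j(e^X L)\, d\gamma_{\Sym(n)}(X).
\]

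Second, I would show that $\tilde V_j:\mathcal{K}(V)\to\mathbb{R}$ is a continuous, translation-invariant, $O(n)$-invariant, $j$-homogeneous valuation. The valuation axiom and $j$-homogeneity pass through the integral from $V_j$ at once; translation-invariance is immediate from $V_j(e^X(L+t)) = V_j(e^XL + e^Xt) = V_j(e^XL)$; and $O(n)$-invariance follows by writing $e^X k = k\, e^{\Ad(k^{-1})X}$ and exploiting the $O(n)$-invariance of $\gamma_{\Sym(n)}$ under conjugation (equivalently, the adjoint action). Continuity will follow from dominated convergence once an integrable majorant in $X$ is in hand. By \thref{Hadwiger's Characterization Theorem}, $\tilde V_j$ must therefore be a scalar multiple of $V_j$; evaluating at the unit ball $L = B^n$ and using $V_j(B^n) = \binom{n}{j}\kappa_n/\kappa_{n-j}$ identifies the scalar as precisely $c_j$. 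Plugging $\tilde V_j = c_j V_j$ back into the previous display yields the claimed formula.

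The main technical obstacle will be establishing the integrability and continuity of $\tilde V_j$. Since $V_j(e^X L)$ grows at most like $e^{j\, \lambda_{\max}(X)}$, where $\lambda_{\max}(X)$ is the largest eigenvalue of the symmetric matrix $X$, one needs the Gaussian factor $e^{-\|X\|^2/2}$ on $\Sym(n)$ to dominate this growth. This is the one place where the noncompactness of $GL(n)$ is genuinely felt, and it is precisely what motivates the paper's choice of a Gaussian — rather than a Haar-type — measure on the factor $\mathfrak{p}_0$.
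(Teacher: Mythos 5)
Your proposal is correct and follows essentially the same route as the paper's proof: unfold $m_{\overline{GL(n)}}$ via the Cartan decomposition, apply Hadwiger's General Integral Geometric Theorem over $\overline{O(n)}$ (with the factor $2$ for the two components), show that $L\mapsto\int_{\Sym(n)} V_j(e^X L)\,d\gamma_{\Sym(n)}(X)$ lies in $\Val_j^{O(n)}(V)=\langle V_j\rangle$, and evaluate at $B^n$ to identify $c_j$. The one step you flag as the ``main technical obstacle'' --- producing an integrable majorant to justify Fubini and dominated convergence --- is exactly what the paper settles via the bound $\diam(e^X Q)\le 2nr\,e^{\|X\|_F}$, so your plan lines up with the paper's execution in every essential respect.
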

 
 \thref{Main} can be applied, for example, to the following list of Lie groups (see \cite[Chapter X, Section 3]{helgason1979differential}). Although \cite{helgason1979differential} considers only the case of connected Lie groups, our methods can be applied also to $O(n)$ as described in the proof of Theorem \ref{thm: O(n) case} (see also Remark \ref{2 components}). Note that different groups $G$ for the same compact group $K$ means that a different Cartan decomposition $\theta$ is being used.
 \begin{table}[H]
     \centering
     \begin{tabular}{|c|c|} \hline 
        $K$ & $G$  \\ \hline
        $SO(n)$ & $SL(n)$\\ \hline 
         $O(n)$ & $GL(n)$\\ \hline 
         $Sp(n)$ & $SU^{\ast}(2n)$ \\ \hline 
         $U(n)$ & $Sp(n,\mathbb{R})$\\ \hline 
         $U(n)$ & $SO^*(2n)$\\ \hline 
         $U(n)$ & $GL(n,\mathbb{C})$\\ \hline 
         $SU(n)$ & $SL(n,\mathbb{C})$\\ \hline
     \end{tabular}
     \caption{Non-exhaustive list of groups where \thref{Main} is applicable.}
     \label{tab:group list}
 \end{table}

\medskip 
\noindent {\bf Organization of the paper.} In Section \ref{Preliminaries}, we present the technical background necessary to prove the main theorems. Section \ref{Proof} presents the proof of \thref{Main}, while Section \ref{O(n) case} is devoted to analyzing the case $K=O(n)$ and therefore proves Theorem \ref{thm: O(n) case}. At the end of this section, we also provide a method for computing the coefficients $c_j$ in the theorem.

\medskip 
\noindent {\bf Acknowledgments.} We are grateful to Dmitry Faifman, Gustavo Granja, Daniel Hug, Naichung Conan Leung, José Natário, João Pimentel Nunes and Martin Pinsonnault for their helpful conversations and useful comments. The first author is partially supported by FCT/Portugal through project 2023.13969.PEX, with DOI identifier 10.54499/2023.13969.PEX, and both authors acknowledge support from FCT/Portugal through project UIDB/04459/2020, with DOI identifier 10.54499/UIDP/04459/2020.

\section{Preliminaries} \label{Preliminaries}

In this section, we introduce some technical concepts that will be used in the proof of \thref{Main}.

\subsection{Cartan Decomposition}
We remind the reader of the Cartan Decomposition on the Lie algebra level.

\begin{definition}
    Let $\mathfrak{g}_0$ be a semisimple Lie algebra over $\mathbb{R}$. A vector space direct sum \[
    \mathfrak{g}_0=\mathfrak{k}_0 \oplus \mathfrak{p}_0
    \]
    is called a Cartan Decomposition if we have the relations \[
    [\mathfrak{k}_0,\mathfrak{k}_0] \subset \mathfrak{k}_0, [\mathfrak{k}_0,\mathfrak{p}_0] \subset \mathfrak{p}_0, [\mathfrak{p}_0,\mathfrak{p}_0] \subset \mathfrak{k}_0
    \]
    and the Killing form $B_{\mathfrak{g}_0}$ is negative definite on $\mathfrak{k}_0$ and positive definite on $\mathfrak{p}_0$.
\end{definition}
A Cartan Decomposition determines an involution $\theta$ by the formula \[
\theta = \left\{ \begin{array}{cl}
+ {\rm id} &  \ \text{on } \mathfrak{k}_0 \\
- {\rm id} &  \ \text{on } \mathfrak{p}_0
\end{array} \right.
\]
Conversely, an involution $\theta$ such that $B_\theta(X,Y)=-B(X,\theta Y)$ is positive definite gives a Cartan Decomposition through its $+1$ and $-1$ eigenspaces. We call such $\theta$ a \textit{Cartan involution}. \\
We can extend this decomposition to the level of Lie groups.
\begin{theorem}[{\cite[Theorem 6.31]{knapp1996lie}}]\thlabel{Cartan Decomposition}
    Let $G$ be a semisimple Lie group, let $\theta$ be a Cartan involution of its Lie algebra $\mathfrak{g}_0$, let $\mathfrak{g}_0=\mathfrak{k}_0 \oplus \mathfrak{p}_0$ be the corresponding Cartan decomposition, and let $K$ be a connected Lie subgroup of $G$ with Lie algebra $\mathfrak{k}_0$. Then 
    \begin{enumerate}[(a)]
        \item there exists a Lie group automorphism $\Theta$ of $G$ with differential $\theta$, and $\Theta$ has $\Theta^2=1$;
        \item the subgroup of $G$ fixed by $\Theta$ is $K$;
        \item the mapping $K \times \mathfrak{p}_0 \to G$ given by $(k,X) \mapsto k \exp X$ is a diffeomorphism onto;
        \item $K$ is closed;
        \item $K$ contains the center $Z$ of $G$;
        \item $K$ is compact if and only if $Z$ is finite;
        \item when $Z$ is finite, $K$ is a maximal compact subgroup of $G$.
    \end{enumerate}
\end{theorem}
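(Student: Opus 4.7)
The plan is to bootstrap the classical polar decomposition on $GL(\mathfrak{g}_0)$ up to $G$ using the adjoint representation $\mathrm{Ad}: G \to GL(\mathfrak{g}_0)$, then reverse-engineer $\Theta$ from the resulting geometric decomposition. The key observation making this possible is that $B_\theta$ is an inner product on $\mathfrak{g}_0$, and with respect to it $\mathrm{ad}(X)$ is symmetric for $X \in \mathfrak{p}_0$ and antisymmetric for $X \in \mathfrak{k}_0$. Thus the abstract Lie-algebraic splitting $\mathfrak{g}_0 = \mathfrak{k}_0 \oplus \mathfrak{p}_0$ matches, under $\mathrm{ad}$, the antisymmetric/symmetric splitting that underlies the polar decomposition on $GL(\mathfrak{g}_0)$.

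For (a), I would first construct $\Theta$ on the simply-connected cover $\tilde{G}$, where $\theta$ integrates uniquely to a Lie group automorphism $\tilde{\Theta}$ characterized by $\tilde{\Theta}(\exp X) = \exp(\theta X)$. The descent to $G$ amounts to checking that $\tilde{\Theta}$ preserves $\ker(\tilde{G} \to G)$, which is a discrete central subgroup. This follows because for a central element $z$ one has $\mathrm{Ad}(\tilde{\Theta}(z)) = \theta \circ \mathrm{Ad}(z) \circ \theta^{-1} = \mathrm{id}$, so $\tilde{\Theta}(z)$ is itself central; a connectedness/discreteness argument then forces the kernel to be stable under $\tilde{\Theta}$.

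For (b) and (c), the main argument is this: given $g \in G$, apply the polar decomposition of $GL(\mathfrak{g}_0)$ (with respect to $B_\theta$) to $\mathrm{Ad}(g)$, obtaining $\mathrm{Ad}(g) = u \, p$ with $u$ orthogonal and $p$ positive-definite symmetric. Using the symmetric/antisymmetric correspondence above, one writes $p = \exp(\mathrm{ad}(X))$ for a unique $X \in \mathfrak{p}_0$, and then sets $k := g \exp(-X)$. The construction gives $\mathrm{Ad}(k) = u$ orthogonal, which via $\theta$-invariance forces $\theta(\log \mathrm{Ad}(k)) = \log \mathrm{Ad}(k)$ and hence $k \in K$. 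This yields the decomposition $g = k \exp(X)$; smoothness and bijectivity of $(k,X) \mapsto k\exp(X)$ follow from uniqueness of the polar decomposition and injectivity of $\exp$ on $\mathfrak{p}_0$. Simultaneously, the fixed set of $\Theta$ is exactly those elements whose $\mathfrak{p}_0$-part vanishes, i.e. $K$, giving (b).

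The remaining items follow more directly: (d) $K$ is the fixed-point set of a continuous automorphism, hence closed; (e) $\Theta$ acts trivially on $Z$ by the same centrality argument from (a), so $Z \subset K$; (f) $\mathrm{Ad}(K)$ is closed in the compact orthogonal group of $B_\theta$ and hence compact, while $K \to \mathrm{Ad}(K)$ has kernel $Z$, so $K$ is compact iff $Z$ is finite; (g) if $K' \supsetneq K$ were a larger compact subgroup, it would contain some $k\exp(X)$ with $X \neq 0$, but then the one-parameter subgroup $\{\exp(tX)\}$ would lie (up to $K$) in $K'$ and be non-compact since $\mathrm{ad}(X)$ has a real nonzero eigenvalue, a contradiction. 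The main technical obstacle is verifying that $\exp: \mathfrak{p}_0 \to G$ is not just injective but a proper embedding onto a closed submanifold, which is needed to ensure the smooth inverse of $(k,X) \mapsto k\exp(X)$ is globally defined. This hinges on showing that $\mathrm{Ad}(\exp(X_n)) \to \infty$ in $GL(\mathfrak{g}_0)$ whenever $\|X_n\| \to \infty$, which follows from the positive-definiteness of $B_\theta$ and the spectral theorem applied to the symmetric operators $\mathrm{ad}(X_n)$.
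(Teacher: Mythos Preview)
The paper does not give a proof of this theorem: it is stated in the Preliminaries section as a citation from Knapp \cite[Theorem 6.31]{knapp1996lie} and used as a black box, so there is no ``paper's own proof'' to compare your proposal against. Your sketch is in fact the standard strategy found in Knapp --- transport the polar decomposition from $GL(\mathfrak{g}_0)$ via $\mathrm{Ad}$ using the inner product $B_\theta$ --- so at the level of overall approach you are aligned with the source the paper cites.

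That said, a few steps in your sketch are genuine gaps rather than routine omissions. In (a), showing that $\tilde\Theta$ maps $\ker(\tilde G\to G)$ into the center of $\tilde G$ is not enough: you need it to preserve the specific discrete subgroup $\ker(\tilde G\to G)$, and ``a connectedness/discreteness argument'' does not obviously furnish this. The usual route is the reverse of what you outline: first establish the global decomposition $G=K\exp(\mathfrak{p}_0)$ (via the adjoint group $\mathrm{Int}(\mathfrak{g}_0)$, where the center is trivial and the polar decomposition applies cleanly), and only then \emph{define} $\Theta(k\exp X):=k\exp(-X)$ and verify it is a homomorphism. In (b)--(c), when you write $\mathrm{Ad}(g)=u\,p$ you must check that the positive-definite factor $p$ remains an automorphism of $\mathfrak{g}_0$ (so that $\log p$ is a derivation, hence in $\mathrm{ad}(\mathfrak{g}_0)$ by semisimplicity, and symmetric, hence in $\mathrm{ad}(\mathfrak{p}_0)$); this is a nontrivial lemma you have skipped. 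Finally, the line ``$\mathrm{Ad}(k)=u$ orthogonal \ldots\ forces $\theta(\log\mathrm{Ad}(k))=\log\mathrm{Ad}(k)$'' is problematic since orthogonal operators need not have real logarithms; the correct conclusion from orthogonality is $\mathrm{Ad}(\Theta(k))=\mathrm{Ad}(k)$, which only gives $k\in K$ once you already know $Z\subset K$, so the logical order of (b), (c), (e) needs care.
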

\subsection{Gaussian measure on \texorpdfstring{$\mathfrak{p}_0$}{p0}}
Keeping in mind that we  want to integrate along a non-compact group $G$ with a decomposition as in \thref{Cartan Decomposition}, we have to find a suitable finite measure in $\mathfrak{p}_0$ such that the left hand side of \eqref{Main formula} is a finite value.
Let $d=\dim \mathfrak{p}_0$. We remind the reader that the Frobenius inner product is defined by $\left< X,Y \right>_{F}=\tr\left( X^{T}Y \right) $. It is clear that $\langle \cdot,\cdot \rangle_F$ is $O(n)$-invariant, that is, \[
\left \langle \vartheta X,\vartheta Y\right \rangle_F=\left \langle X\vartheta,Y\vartheta\right \rangle_F=\left \langle X,Y\right \rangle_F
\]
for $\vartheta \in O(n)$. In what follows, we equip $\mathfrak{p}_0$ with the Frobenius inner product. Consider the linear map $h:\mathbb{R}^{d} \to \mathfrak{p}_0$ that sends the canonical basis of $\mathbb{R}^{d}$ to an orthonormal basis of $\mathfrak{p}_0$. By construction, it is a unitary transformation. We define the standard Gaussian measure $\gamma_{\mathfrak{p}_0}$ on $\mathfrak{p}_0$ as 
\begin{align}\label{GaussianMeasure}
    \gamma_{\mathfrak{p}_0}(A)=\left(\frac{1}{\sqrt{2\pi}}\right)^{d} \int_A e^{-\frac{1}{2} \|X\|_F^2} d\rho(X),
\end{align}
where $A \subset \mathfrak{p}_0$ is a Borel set and $\rho=h_\ast \left(\lambda_{ \mathbb{R}^{d}}\right)$ is the pushforward measure of the Lebesgue measure on $ \mathbb{R}^d$ by $h$.

\begin{proposition}\thlabel{gaussian measure conjugation invariant}
    $\gamma_{\mathfrak{p}_0}$ is $K$-conjugation invariant.
\end{proposition}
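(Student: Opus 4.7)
The plan is to reduce the statement to the standard fact that the isotropic Gaussian on a finite-dimensional Euclidean space is invariant under orthogonal transformations, by showing that for each $k \in K$ the conjugation map $\mathrm{Ad}(k)$ acts as a linear isometry of $(\mathfrak{p}_0, \langle \cdot,\cdot \rangle_F)$.

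First I would verify that $\mathrm{Ad}(k)$ stabilises $\mathfrak{p}_0$: since $\Theta(k)=k$ for $k\in K$, differentiating the identity $\Theta(k g k^{-1}) = k\,\Theta(g)\, k^{-1}$ yields $\theta \circ \mathrm{Ad}(k) = \mathrm{Ad}(k) \circ \theta$. Applied to $X\in\mathfrak{p}_0$ this gives $\theta(\mathrm{Ad}(k)X) = -\mathrm{Ad}(k)X$, so $\mathrm{Ad}(k)X \in \mathfrak{p}_0$.

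Next comes the key geometric step: to prove that $\mathrm{Ad}(k)$ preserves $\langle \cdot,\cdot \rangle_F$. Because $G$ is a closed linear group and the Cartan involution in the setup of \thref{Cartan Decomposition} is (up to conjugation) the standard one $\Theta(g) = (g^T)^{-1}$, the subgroup $K = \{g\in G : \Theta g = g\}$ lies inside $O(V)$, hence $k^{-1} = k^T$. Then for $X,Y \in \mathfrak{p}_0$ the cyclic property of the trace gives
\[
\langle \mathrm{Ad}(k)X, \mathrm{Ad}(k)Y \rangle_F = \tr\bigl((kXk^{-1})^T kYk^{-1}\bigr) = \tr\bigl(k X^T Y k^{-1}\bigr) = \tr(X^T Y) = \langle X,Y\rangle_F.
\]
Thus $\mathrm{Ad}(k)$ is an orthogonal transformation of $(\mathfrak{p}_0, \langle \cdot,\cdot \rangle_F)$; in particular $|\det \mathrm{Ad}(k)| = 1$ and $\|\mathrm{Ad}(k)X\|_F = \|X\|_F$.

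To finish, I would combine these facts via change of variables in \eqref{GaussianMeasure}. The map $h:\mathbb{R}^d \to \mathfrak{p}_0$ is, by construction, an isometry of Euclidean spaces, so $\rho = h_*\lambda_{\mathbb{R}^d}$ is Lebesgue measure on $\mathfrak{p}_0$ with respect to any orthonormal basis and is therefore invariant under every orthogonal transformation of $(\mathfrak{p}_0,\langle\cdot,\cdot\rangle_F)$; in particular $(\mathrm{Ad}(k))_*\rho = \rho$. Combined with the $\mathrm{Ad}(k)$-invariance of the density $e^{-\|X\|_F^2/2}$, this immediately yields $\gamma_{\mathfrak{p}_0}(\mathrm{Ad}(k)A) = \gamma_{\mathfrak{p}_0}(A)$ for every Borel set $A\subset \mathfrak{p}_0$. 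The main obstacle is the middle step, i.e.\ checking that in the linear-matrix realisation of $G$ the compact piece $K$ is genuinely a subgroup of $O(V)$ so that conjugation is Frobenius-orthogonal; this is essentially built into the hypothesis that $G$ is a closed linear group with Cartan involution given (up to inner automorphism) by transpose-inverse, and should be stated explicitly.
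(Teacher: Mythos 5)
Your proof is correct and takes essentially the same route as the paper: showing via the commutation of $\theta$ with $\mathrm{Ad}(k)$ that conjugation stabilises $\mathfrak{p}_0$, checking that $\mathrm{Ad}(k)$ is a Frobenius isometry (using $K\subset O(n)$), and then transporting invariance of Lebesgue measure and the radial density through $h$. The only slight difference is that you make explicit the hypothesis $K\subset O(V)$ and the cyclic-trace computation, which the paper treats as ``clear from the definition'' and absorbs into the notation $\gamma_{\mathfrak{p}_0}(kAk^T)$; this added care is reasonable, and as you note, in this setting $K\subset O(n)$ is indeed guaranteed by the hypotheses of \thref{Alesker finite-dimensionality theorem}.
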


\begin{proof}
    Let $k \in K$, $P \in \mathfrak{p}_0$, and $\conj_k: G \to G$ conjugation by $k$. Notice that
    \begin{align*}
        &\theta\left(\Ad(k)(P) \right)= d\Theta(d \conj_k(P))= d(\Theta \circ \conj_k) (P) = \\
        &=d(\conj_{\Theta(k)} \circ \, \Theta)(P) = \Ad(\Theta(k)) (\theta(P)),
    \end{align*}
    where $\Ad$ is the adjoint representation of $G$.
    Since $K$ is the set of fixed points of $\Theta$ and $\mathfrak{p}_0$ is the $-1$ eigenspace of $\theta$, it follows that 
    \begin{align*}
        \theta\left(\Ad(k)(P) \right)=\Ad(\Theta(k)) (\theta(P))=-\Ad(k)(P).
    \end{align*}
    Therefore, $Ad(k)(P) \in \mathfrak{p}_0$. By invertibility of $Ad(k)$, we conclude that $Ad(k)\mathfrak{p}_0=\mathfrak{p}_0$. Since we are working with matrix groups, $Ad(k)(P)=kPk^{-1}$. Thus, $\mathfrak{p}_0$ is invariant by $K$-conjugation, so it makes sense to consider if $\gamma_{\mathfrak{p}_0}$ is $K$-conjugation invariant.
    Let $x=(x_1,\dots,x_d) \in  \mathbb{R}^{d}, k \in K$. By definition,
    \begin{align*}
        \gamma_{\mathfrak{p}_0}(A)=\left(\frac{1}{\sqrt{2\pi}}\right)^{d}\int_{h^{-1}(A)}e^{-\frac{1}{2}\|h(x)\|_{F}^{2}} d\lambda_{ \mathbb{R}^{d}}(x).
    \end{align*}
    Hence,
    \begin{align*}
        \gamma_{\mathfrak{p}_0}(k A k^T)=\left(\frac{1}{\sqrt{2\pi}}\right)^{d}\int_{h^{-1}(k A k^T)}e^{-\frac{1}{2}\|h(x)\|_F^2} d\lambda_{ \mathbb{R}^{d}}(x).
    \end{align*}
    By definition of the Frobenius inner product, the map $\operatorname{conj}_k$ given by conjugation by $k$ is a orthogonal automorphism of $\mathfrak{p}_0$. Hence, $h^{-1} \circ \operatorname{conj}_k \circ h$ is a orthogonal automorphism of $\mathbb{R}^{d}$. Consider the change of variables 
    \begin{align*}
        x \mapsto (h^{-1} \circ \operatorname{conj}_k \circ h)(x).
    \end{align*}
    We obtain
    \begin{align*}
        \gamma_{\mathfrak{p}_0}(k A k^T) &=\left(\frac{1}{\sqrt{2\pi}}\right)^{d}\int_{h^{-1}(k A k^T)}e^{-\frac{1}{2}\|h(x)\|_F^2} d\lambda_{ \mathbb{R}^{d}}(x) = \\
        &=\left(\frac{1}{\sqrt{2\pi}}\right)^{d}\int_{h^{-1}(A)}e^{-\frac{1}{2}\|h(h^{-1} \circ \,\operatorname{conj}_k \circ \, h) (x)\|_F^2} d\lambda_{ \mathbb{R}^{d}}((h^{-1} \circ \operatorname{conj}_k \circ \, h)(x)) = \\
        &=\left(\frac{1}{\sqrt{2\pi}}\right)^{d}\int_{h^{-1}(A)}e^{-\frac{1}{2}\|h(x)\|_F^2} d\lambda_{ \mathbb{R}^{d}}(x) = \\
        &=\gamma_{\mathfrak{p}_0}(A).
    \end{align*}
    In the first step, we applied the change of variables. In the second step, we used the invariance of the Lebesgue measure by orthogonal transformations and the $O(n)$-invariance of the Frobenius norm.
\end{proof}

\subsection{Separation by hyperplanes}
We will also make use of the next lemma in the proof of \thref{Main}. Although well-known in the literature, to the best of our knowledge, there is no written proof of it, so we give one here. The terminology being used is the following: an hyperplane of $V$ will be written in the form
\begin{align*}
    H_{u, \alpha}=\left\{x \in \mathbb{R}^n:\langle x, u\rangle=\alpha, \ \alpha \in \mathbb{R}\right\}
\end{align*}
where $u \in V \setminus \{0\}$ is the normal vector of $H_{u,\alpha}$. The hyperplane $H_{u,\alpha}$ bounds the two closed half-spaces
\begin{align*}
    H_{u, \alpha}^{-}:=\left\{x \in \mathbb{R}^n:\langle x, u\rangle \leq \alpha\right\} \quad \mbox{and} \quad  H_{u, \alpha}^{+}:=\left\{x \in \mathbb{R}^n:\langle x, u\rangle \geq \alpha\right\} .
\end{align*}
In case there is no confusion, we will simply denote an hyperplane and its half-spaces by $H,H^-,H^+$, respectively. For some subset $A \subset V$, we say $H$ is a supporting hyperplane of $A$ if there exists $x \in A \cap H$ and, furthermore, either $A \subset H^-$ or $A \subset H^+$. We say that the hyperplane $H_{u,\alpha}$ separates two sets $A$ and $B$ if $A \subset H_{u,\alpha}^-$ and $B \subset H_{u,\alpha}^+$ or vice-versa. They are strongly separated by $H_{u,\alpha}$ if there exists an $\varepsilon >0$ such that $H_{u,\alpha-\varepsilon}$ and $H_{u,\alpha +\varepsilon}$ both separate $A$ and $B$.
\begin{lemma}\thlabel{weakly separated iff}
	Let $M,L \in \mathcal{K}\left( V \right) $, $G$ a subgroup of $GL\left( V \right) $ and $\overline{G}(M, L)$ the set of elements of $\overline{G}$ for which $M \cap \left(\overline{g}L\right) \neq \emptyset$, but such that they can be separated by a hyperplane. If we write $\overline{g}L=gL+t$, then we have that $\overline{g} \in \overline{G}(M,L)$ if and only if $t \in \partial(M - g L)$.
\end{lemma}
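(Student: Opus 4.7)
The plan is to translate both defining conditions of $\overline{G}(M,L)$ into statements about the Minkowski sum $M-gL := M+(-gL)$, which is a compact convex subset of $V$ (and in particular closed) since $M$ and $gL$ are convex bodies.

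First I would verify that the non-emptiness condition $M\cap (gL+t)\neq\emptyset$ is equivalent to $t\in M-gL$. This is essentially a tautology: $M\cap(gL+t)\neq\emptyset$ means there exist $m\in M$ and $p\in gL$ with $m=p+t$, i.e.\ $t=m-p\in M-gL$, and conversely any such decomposition produces a point of $M\cap(gL+t)$.

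Next I would characterise when the intersecting pair $M$ and $gL+t$ admits a separating hyperplane in the weak sense fixed in the excerpt. The standard convex-geometric fact I intend to use is that two intersecting convex bodies can be separated by a hyperplane if and only if their interiors (or, in degenerate situations, their relative interiors) are disjoint. Arguing exactly as in the previous paragraph, $\mathrm{int}(M)\cap\mathrm{int}(gL+t)\neq\emptyset$ if and only if $t\in\mathrm{int}(M)-\mathrm{int}(gL)$. Combining this with the identity
\begin{equation*}
\mathrm{int}(M-gL)=\mathrm{int}(M)-\mathrm{int}(gL),
\end{equation*}
which follows because the Minkowski sum of an open set with any set is open and because the interior of a convex body is dense in the body, I conclude that $M$ and $gL+t$ can be separated by a hyperplane if and only if $t\notin\mathrm{int}(M-gL)$.

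Putting the two equivalences together, $\overline{g}\in\overline{G}(M,L)$ if and only if $t\in (M-gL)\setminus\mathrm{int}(M-gL)$. Since $M-gL$ is closed, this set is precisely $\partial(M-gL)$, giving the stated conclusion. The main obstacle is the clean justification of the ``separable $\Longleftrightarrow$ interiors disjoint'' step in the possibly degenerate case where $M$ or $gL$ fails to be full-dimensional; this is handled by replacing interiors with relative interiors and invoking the proper separation theorem for convex sets, so no substantive new argument is required beyond acknowledging the lower-dimensional case.
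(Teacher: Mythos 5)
Your overall strategy --- to reduce to the Minkowski difference $M-gL$ and translate both the non-emptiness condition and the separability condition into properties of this single body --- is exactly the same reduction as in the paper (it is the content of Schneider's Lemma 1.3.6, which the paper cites for this step). The first paragraph, $M\cap(gL+t)\neq\emptyset\iff t\in M-gL$, is correct and matches the paper.

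The gap is in the key identity you invoke. The equation
$\mathrm{int}(M-gL)=\mathrm{int}(M)-\mathrm{int}(gL)$
is false whenever $M$ or $gL$ is lower-dimensional: take $M=[0,1]\times\{0\}$ and $gL=\{0\}\times[0,1]$ in $\mathbb{R}^2$. Then $M-gL=[0,1]\times[-1,0]$ has non-empty interior, but $\mathrm{int}(M)=\mathrm{int}(gL)=\emptyset$. Only the inclusion $\mathrm{int}(M)-\mathrm{int}(gL)\subseteq\mathrm{int}(M-gL)$ follows from openness of a Minkowski sum with an open set; the density-of-the-interior argument cannot rescue equality, because the left side can be empty while the right side is not. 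Thus the chain ``separable $\iff$ interiors disjoint $\iff t\notin\mathrm{int}(M)-\mathrm{int}(gL)\iff t\notin\mathrm{int}(M-gL)$'' does not close.

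Your fallback --- ``handled by replacing interiors with relative interiors and invoking the proper separation theorem'' --- is also not a drop-in repair, for two reasons. First, the proper-separation theorem (disjoint relative interiors $\iff$ \emph{proper} separation) characterizes a strictly stronger property than the one used in the lemma, and the two genuinely diverge exactly in the degenerate case: if $M$ and $gL+t$ both lie in a common hyperplane $H$, then $H$ separates them but not properly, yet they are still in $\overline{G}(M,L)$. Second, when $\dim(M-gL)<n$, the topological boundary $\partial(M-gL)$ in $V$ is all of $M-gL$, which is \emph{not} the complement of $\mathrm{relint}(M-gL)$; so ``$t\notin\mathrm{relint}(M-gL)$'' and ``$t\in\partial(M-gL)$'' are different conditions there (e.g.\ $M=gL=\{0\}$, $t=0$). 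A correct proof along your lines must therefore split into the full-dimensional case (where $\mathrm{int}=\mathrm{relint}$, separation is automatically proper, and the proper-separation theorem applies cleanly) and the lower-dimensional case (where $\partial(M-gL)=M-gL$ and separability holds trivially because both sets sit in a common hyperplane). The paper does exactly this case split, using a support-hyperplane theorem in the forward direction and a direct $\varepsilon$-ball argument (rather than the proper-separation theorem) in the full-dimensional part of the backward direction.
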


\begin{proof}
	If $t \in \partial(M-g L)$, since $M-g L$ is a convex body, by \cite[Theorem 1.3.2]{schneider2013convex} there is a supporting hyperplane of $M - g L$ through $t$. Hence, $M-gL$ and $\{t\}$ are separated by an hyperplane. This is equivalent, by \cite[Lemma 1.3.6]{schneider2013convex}, to $M$ and $gL +t$ being separated by a hyperplane. On the other hand, suppose that $M$ and $gL + t$ are separable by a hyperplane, but $M \cap \left(gL+t\right) \neq \emptyset$. Again, \cite[Lemma 1.3.6]{schneider2013convex} implies that $M- g L$ and $\{t\}$ are separable by a hyperplane. Now suppose that $t \notin \partial(M-g L)$. If $\dim(M-gL)<n$, then $t \notin M-gL$, so $\{t\}$ and $M-gL$ can be strongly separated, hence $K$ and $gL+t$ can be strongly separated, so $M \cap \left(gL+t\right) = \emptyset$, a contradiction. If $\dim(M-gL)=n$, we first show that $t \notin \operatorname{int}(M-gL)$. Suppose, by contradiction,  that   $t \in \operatorname{int}(M-gL)$. Then there would exist a $\varepsilon>0$ such that 
    \begin{align*}
        t+\varepsilon B^n \subset M - gL.     
    \end{align*}
    Let $H_{u,\alpha}$ be the hyperplane that separates $M$ and $gL+t$. Without loss of generality, we have
    \begin{align*}
        M \subset H^{-}_{u,\alpha}, \quad gL+t \subset H^+_{u,\alpha}.
    \end{align*}
    For any $v \in B^n$, we have $t+\varepsilon v \in M-gL$. Then there exist $m\in M,l \in L$ such that
    \begin{align*}
        t+\varepsilon v= m-gl \iff m=gl+t+\varepsilon v.
    \end{align*}
    Therefore,
    \begin{align*}
        \alpha + \varepsilon \langle v,u \rangle\leq \langle gl+t+\varepsilon v, u \rangle=\langle m,u \rangle \leq \alpha.
    \end{align*}
    Hence, we have $\langle v, u \rangle \leq 0$ for all $v \in B^n$. Since $v \in B^n \iff -v \in B^n$, we must have $\langle v,u \rangle = 0$ for all $v$, that is, $u=0$, so there is no separating hyperplane, a contradiction. Hence, the other possible case is $t \notin M-gL$, which gives the same contradiction as above.
\end{proof}
\subsection{Intrinsic volumes} We remind the reader how to compute the intrisic volumes of the unit ball $B^{n}$, which will be used in Section \ref{O(n) case}. Letting $M=B^{n}$ in Steiner's formula \eqref{Steiner's Formula}, we get 
\begin{align*}
    \sum_{j=0}^{n} \epsilon^{n-j} \kappa_{n-j} V_{j}\left(B^{n}\right)=V_{n}\left(B^{n}+\epsilon B^{n}\right)=(1+\epsilon)^{n} \kappa_{n}=\sum_{j=0}^{n} \epsilon^{n-j} {\binom{n}{j}} \kappa_{n}.
\end{align*}

Seeing the expression as a polynomial in $\varepsilon$, we obtain 
\begin{align}\label{intrinsic volumes of unit ball}
	V_j(B^n)={\binom{n}{j}} \frac{\kappa_n}{\kappa_{n-j}}.
\end{align}

\subsection{Norm on \texorpdfstring{$\Val(V)$}{ValV}}Finally, we mention that, with the topology of uniform convergence on compact sets, the space of valuations $\Val(V)$ is a Fréchet space. Moreover, \thref{McMullen's Theorem} implies that the semi-norm 
\begin{align}
	\|\varphi\|:= \sup_{ K \in \mathcal{K}\left( B^{n} \right) }  |\varphi\left( K \right) | \label{valuation norm}
\end{align}
is in fact a norm, making $\Val(V)$ a Banach space.

\section{Proof of Theorem \ref{Main}}\label{Proof}

\begin{proof}
    First we show that $\phi(M \cap \overline{g}L)$ is integrable on $\overline{G}$, showing that the left hand side of \eqref{Main formula} is well defined. 
  Note that  $\overline{g}L= gL +t = k e^X L + t$ where $(k, X,t)  \in K \times \mathfrak{p}_0 \times \mathbb{R}^n$ and,  by definition of $m_{\overline{G}}$, we have
\begin{align}\label{main integral}
    \int_{\overline{G}} \phi( M \cap \overline{g} L) \, dm_{\overline{G}} (\overline{g})
        =\int_{K} \int_{\mathfrak{p}_0} \int_{V}\phi(M \cap \left(k e^X L + t\right)) d\lambda_{V}(t) d\gamma_{\mathfrak{p}_0}(X) d\nu_{K}(k).
\end{align}

    Let $\overline{G}(M,L)$ be the set of triples $(k,X,t) \in K \times \mathfrak{p}_0 \times \mathbb{R}^n$ such that $M \cap k e^X L + t \neq \emptyset$, but can be separated by a hyperplane. By \thref{weakly separated iff}, we have that $(k, X,t) \in \overline{G}(M,L)$ iff $t \in \partial(M-k e^X L)$. Hence we get
    \begin{align*}
        &\nu_{K} \times \gamma_{\mathfrak{p}_0} \times \lambda_{V} \left(\overline{G}(M,L) \right) = \\
        &\int_{K} \int_{\mathfrak{p}_0} \int_{V} \mathbf{1}_{\overline{G}\left( M,L \right) }(k, X, t) d\lambda_{V}(t) d\gamma_{\mathfrak{p}_0}(X) d\nu_{K}(k) = \\
        &\int_{K} \int_{\mathfrak{p}_0} \lambda_{V}(\partial(M-k e^X L)) d\gamma_{\mathfrak{p}_0}(X) d\nu_{K}(k) = 0.
    \end{align*}
    For $(k,X,t) \in K \times \mathfrak{p}_0 \times \mathbb{R}^n \setminus \overline{G}(M,L)$, the function \[
    (k,X,t) \mapsto \Phi(M \cap \left(k e^X L + t\right))
    \] 
    is continuous, being the composition of the continuous functions \[
    (k,X,t) \mapsto k e^X L +t, \quad L \mapsto M \cap L, \quad M \to \phi\left( M \right) 
    ,\] 
	the middle one being continuous by \cite[Theorem 1.8.10]{schneider2013convex}. Therefore,\[
	(k,X,t) \mapsto \Phi(M \cap \left(k e^X L + t \right))
	.\] 
	is continuous outside a set of measure 0, hence $\nu_{K} \times \gamma_{\mathfrak{p}_0} \times \lambda_{V}$-measurable. Furthermore, $\phi$ is bounded on the compact set $\{L \in \mathcal{K}\left( V \right) : L \subset M\}$, so to check integrability of $\phi(M \cap \overline{g}L)$ it is enough to show that \[
	\nu \times \gamma_{\mathfrak{p}_0} \times \lambda_{V}(\{(k,X,t):\phi(M \cap \left(k e^X L +t\right)) \neq 0\}) < \infty
	.\]  
	We have that
    \begin{align*}
        &\nu_{K} \times \gamma_{\mathfrak{p}_0} \times \lambda_V(\{(k,X,t):\phi(M \cap (k e^X L + t)) \neq 0\}) \leq \\
        &\leq \nu_{K} \times \gamma_{\mathfrak{p}_0} \times \lambda_V(\{(k,X,t):M \cap (k e^X L+t) \neq \emptyset\}) = \\
        &=\int_{K} \int_{\mathfrak{p}_0} \int_{V} \mathbf{1}_{\{(k,X,t): M \cap (k e^X L + t) \neq \emptyset\}}(k,X,t) d\lambda_V(t)d\gamma_{\mathfrak{p}_0}(X)d\nu_{K}(k).
    \end{align*}
    By compactness, both $M$ and $L$ are contained in a cube $Q=[-r,r] \times \hdots \times [-r,r]= [-r,r]^n$ for some large enough $r$. Since multiplication by $k$ and $e^{X}$, and translation by $t$ are all bijections, it follows that 
    \[
    M \cap (k e^X L + t) \subset Q \cap (k e^X Q + t) 
    .\] 
    Furthermore, $Q$ is contained in a ball centered at $0$ of radius $\frac{\diam(Q)}{2}$, that is, $ Q \subset \frac{\diam(Q)}{2}B^n$, where $\diam(C)=\sup_{x,y \in C}\|x-y\|.$ Likewise, \[
    k e^X Q \subset \frac{\diam(k e^X Q)}{2} B^n=\frac{\diam(e^X Q)}{2} B^n
    ,\]  
    since $k \in K \subset O(n)$. 
    We have that 
    $$\frac{\diam(Q)}{2}B^n \cap \left(\frac{\diam(e^X Q)}{2} B^n + t \right) \neq \emptyset \iff \|t\| \leq \frac{\diam(Q)+\diam(e^X Q)}{2}.$$
    Adding it all up, we get
    \begin{align*}
        &\int_{K} \int_{\mathfrak{p}_0} \int_{V} \mathbf{1}_{\{(k,X,t): M \cap (k e^X L + t)\neq \emptyset\}}(k,X,t) d\lambda_V(t)d\gamma_{\mathfrak{p}_0}(X)d\nu_K(k) \leq \\
        &\leq \int_{K} \int_{\mathfrak{p}_0} \int_{V} \mathbf{1}_{\{(k,X,t): Q \cap (k e^X Q + t) \neq \emptyset\}}(k,X,t) d\lambda_V(t)d\gamma_{\mathfrak{p}_0}(X)d\nu_K(k) \leq \\
        &\leq \int_{K} \int_{\mathfrak{p}_0} \lambda_V\left(\frac{\diam(Q)+\diam(e^X Q)}{2} B^n \right) d\gamma_{\mathfrak{p}_0}(X) d\nu_{K}(k) = \\
        &=\lambda_V(B^n) \int_{\mathfrak{p}_0} \left(\frac{\diam(Q)+\diam(e^X Q)}{2}\right)^n d\gamma_{\mathfrak{p}_0}(X).
    \end{align*}
    Consider the unit cube $[0,1]^{n}$. We have that 
    \begin{equation}\label{diam}
        \diam(e^X [0,1]^{n})\leq \left\| e^X \begin{bmatrix}
            1 \\
            0 \\
            \vdots  \\
            0
            \end{bmatrix} \right\| 
            + \left\| e^X \begin{bmatrix}
                0 \\
                1 \\
                \vdots  \\
                0
                \end{bmatrix} \right\| 
            + \cdots
            + 
            \left\| e^X \begin{bmatrix}
                0 \\
                0 \\
                \vdots  \\
                1
                \end{bmatrix} \right\| \\
        \leq n \left\|e^X\right\|_2,
    \end{equation}
    where $\left\|e^X\right\|_2=\max_{\|x \| = 1} \left\|e^X x \right\|$ is the matrix norm induced by the euclidean norm $\|\cdot\|$. Let $x \in \mathbb{R}^n$ with $\|x\|=1$ such that $\left\|e^X\right\|_2=\left\|e^X x \right\|$. By completing $\{x\}$ to form an orthonormal basis of $\mathbb{R}^n$, there is $P \in O\left( n \right) $ whose first column is $x$. Then 
    \begin{align}\label{norms1}
        \|e^X\|_2=\|e^X P e_1\|=\sqrt{\|e^X P e_1\|^2}\leq\sqrt{ \sum_{i=1}^n\| e^X P e_i \|^2}= \|e^X P\|_F = \|e^X\|_F.
    \end{align}
    Also, by the submultiplicativity of a matrix norm,
    \begin{align}\label{norms2}
        \left\|e^X \right\| = \left\|\sum_{k=0}^{\infty} \frac{X^k}{k!} \right\| \leq \sum_{k=0}^{\infty} \left\|\frac{X^k}{k!} \right\| \leq \sum_{k=0}^{\infty} \frac{\left\| X \right\|^k}{k!} = e^{\|X\|}. 
    \end{align}
    Combining \eqref{diam}, \eqref{norms1} and \eqref{norms2} we obtain
    \begin{align}
        \diam(e^X [0,1]^{n}) \leq n \left\|e^X\right\|_2 \leq n \left\|e^X \right\|_F \leq n e^{\|X \|_F}.
    \end{align}
Recall that the Minkowski sum of two sets $A, B \subset V$ is given by 
\begin{equation}\label{Minkowski sum}
    A+B= \left\{a+b: a \in A, b \in B \right\}.
\end{equation}
    Using this sum  we can write   $Q=\left[ -r,r \right]^{n}= 2r [0,1]^{n} - (r,\dots,r)$ and it  it follows that 
    \begin{align*} 
        \diam(e^X Q) & = \diam(e^X(2r [0,1]^{n} - (r,\dots,r))) \\
        &=\diam(2r e^X [0,1]^{n} - e^X (r,\dots,r)) = \\
        &=2r \diam(e^X [0,1]^{n}) \leq 2nr e^{\left\|X \right\|_F},
    \end{align*}
since $\diam$ is linear and invariant by translations. 
Therefore we obtain 
   \begin{equation}\label{inequality of diameter}
    \diam(e^X Q) \leq 2nr e^{\left\|X \right\|_F}.
   \end{equation} 
    Therefore, 
    \begin{align*}
        &\int_{\mathfrak{p}_0} \left(\diam(Q) + \diam(e^X Q) \right)^{n} d\gamma_{\mathfrak{p}_0}(X) \\
        &=\left(\frac{1}{\sqrt{2\pi}} \right)^{d}\int_{\mathfrak{p}_0} \left(\diam(Q) + \diam(e^X Q) \right)^{n} e^{-\frac{1}{2}\left\|X \right\|_F^2} d\rho(X) \\
        &\leq \left(\frac{1}{\sqrt{2\pi}} \right)^{d} \int_{\mathfrak{p}_0} \left(\diam(Q) + 2nr e^{\left\| X \right\|_F}\right)^{n} e^{-\frac{1}{2}\left\| X \right\|_F^2} d\rho(X).
    \end{align*}
    Notice that
    \begin{align*}
         & \left(\diam(Q) + 2nr e^{\left\| X \right\|_F}\right)^{n} e^{-\frac{1}{2}\left\| X \right\|_F^2} \\
          & =\left(\sum_{i=0}^{n} \binom{n}{i} \left(\diam(Q)\right)^i\left(2nr e^{\|X\|_F}\right)^{n-i}\right) e^{-\frac{1}{2}\left\| X \right\|_F^2} = \\
        &=\sum_{i=0}^{n} \binom{n}{i}\left(\diam(Q)\right)^i\left(2nr\right)^{n-i} e^{(n-i)\|X\|_F-\frac{1}{2}\left\|X \right\|_F^2},
    \end{align*}
    so we can conclude that integral \eqref{main integral} is finite. Hence, applying Fubini's Theorem and \thref{Classical General kinematic formula}, we have
    \begin{align*}
        &\int_{K} \int_{\mathfrak{p}_0} \int_{V}\phi(M \cap \left(k e^X L + t\right)) d\lambda_V(t) d\gamma_{\mathfrak{p}_0}(X) d\nu_{K}(k) = \\
        &=\int_{\overline{K}} \int_{\mathfrak{p}_0} \phi(M \cap \overline{k}e^X L) d\gamma_{\mathfrak{p}_0}(X) d\mu_{\overline{K}}(\overline{k})= \sum_{i=1}^{N} \sum_{j=1}^{N} c_{ij}^{\phi} \psi_i(M) \int_{\mathfrak{p}_0} \psi_j(e^X L) d\gamma_{\mathfrak{p}_0}(X).
    \end{align*}
    We can simplify $\Psi_j(L):=\int_{\mathfrak{p}_0} \psi_j(e^X L) d\gamma_{\mathfrak{p}_0}(X)$ further by showing that $\Psi_j \in \Val^{K}\left( V \right) $. We have that
    \begin{align*}
        &\Psi_j(M \cup L) + \Psi_j(M \cap L)= \int_{\mathfrak{p}_0} \psi_j(e^X \left(M \cup L\right)) + \psi_j(e^X \left(M \cap L\right)) \, d\gamma_{\mathfrak{p}_0}(X) = \\
        &= \int_{\mathfrak{p}_0} \psi_j(e^X M \cup e^X L) + \psi_j(e^X M \cap e^X L) \, d\gamma_{\mathfrak{p}_0}(X) = \\
        &=\int_{\mathfrak{p}_0} \psi_j(e^X M) + \psi_j(e^X L) \, d\gamma_{\mathfrak{p}_0}(X) = \\
        &=\Psi_j(M) + \Psi_j(L),
    \end{align*}
    so $\Psi_j$ is a valuation. 
    Now let $k \in K$
    \begin{align*}
        \Psi_j(k L)& = \int_{\mathfrak{p}_0} \psi_j(e^X k L) d\gamma_{\mathfrak{p}_0}(X)  = \int_{\mathfrak{p}_0} \psi_j(k k^{-1} e^X k L) d\gamma_{\mathfrak{p}_0}(X)  \\
        & = \int_{\mathfrak{p}_0} \psi_j(k e^{k^{-1}X k} L) d\gamma_{\mathfrak{p}_0}(X)  =\int_{\mathfrak{p}_0} \psi_j(e^{k^{-1}X k} L) d\gamma_{\mathfrak{p}_0}(X), 
    \end{align*}
    where the last equality follows from $K$-invariance of $\psi_j$. 
    Then, by \thref{gaussian measure conjugation invariant} we obtain,
    \begin{equation*}
        \int_{\mathfrak{p}_0} \psi_j(e^{k^{-1}X k} L) d\gamma_{\mathfrak{p}_0}(X) = \int_{\mathfrak{p}_0} \psi_j(e^{X} L) d\gamma_{\mathfrak{p}_0}(X) =\Psi_j(L).
    \end{equation*}
    Hence, $\Psi_j$ is $K$-invariant. 
    
    Next, we show that $\Psi_j$ is continuous. Suppose that $L_i \to L$ in the Hausdorff metric. Then $e^X L_i \to e^X L$, which implies $\psi_j(e^X L_i) \to \psi_j(e^X L)$. Let $r>0$ be large enough so that $L_i,L \subset [-r,r]^n$ for all $i$. By ~\eqref{inequality of diameter}, we have 
    \[
    e^{X}L_i  \subset  e^{X}\left[ -r,r \right]^{n} \subset 2nr e^{\|X\|_{F}}B^{n}
    .\]
    And we have a similar result for $e^{X}L$. Therefore, it follows from  \ref{valuation norm}that
    \[
    \Psi_j\left( \frac{e^{X}L_i}{2nre^{\|X\|_{F}}} \right) \le \|\Psi_j\| \iff \Psi_j\left( e^{X}L_i \right) \le \left( 2nre^{\|X\|_{F}} \right)^{\deg\left( \Psi_j \right) } \|\Psi_j\|
    ,\]
    and we obtain a similar inequality for $\Psi_j\left( e^{X}L \right) $. Moreover, 
    \begin{align*}
    	\int_{\mathfrak{p}_0}& \left( 2nre^{\|X\|_{F}} \right)^{\deg\left( \Psi_j \right) } \|\Psi_j\| d\gamma_{\mathfrak{p}_0}\left( X \right)  = \\
	 &\left( 2nr \right)^{\deg\left( \Psi_j \right) } \|\Psi_j\| \int_{\mathfrak{p}_0} e^{\deg\left( \Psi_j \right) \|X\|_{F}} e^{-\frac{\|X\|_{F}^{2}}{2}} d\rho\left( X \right) < \infty.
    \end{align*}
    Hence, we conclude that 
    $$ \int_{\mathfrak{p}_0} \psi_j(e^{X} L_i) d\gamma_{\mathfrak{p}_0}(X) < \infty  \quad \mbox{and}  \quad \int_{\mathfrak{p}_0} \psi_j(e^{X} L) d\gamma_{\mathfrak{p}_0}(X) < \infty, $$
    and  the conditions of the dominated convergence theorem are satisfied. It follows that 
    \begin{align*}
        \Psi_j(L_i)=\int_{\mathfrak{p}_0} \psi_j(e^{X} L_i) d\gamma_{\mathfrak{p}_0}(X) \to \int_{\mathfrak{p}_0} \psi_j(e^{X} L) d\gamma_{\mathfrak{p}_0}(X) = \Psi_j(L),
    \end{align*} 
    so $\Psi_j \in \Val^{K}\left( V \right) $. Moreover, since $\psi_j$ is homogeneous of degree $\deg\left( \psi_{j}\right) $, so is $\Psi_j$, that is $\Psi_j \in Val_{\deg\left( \psi_j \right) }^K$. Therefore, since $\dim \Val^K (V) < \infty$ then it follows from  McMullen's \thref{McMullen's Theorem} that there are $c\left( \psi_l \right)  \in \mathbb{R}$ such that 
    \begin{align*}
	  \Psi_j(L)=  \int_{\mathfrak{p}_0} \psi_j(e^{X} L) d\gamma_{\mathfrak{p}_0}(X) = \sum_{\psi_l \in Val_{\deg\left( \psi_j \right) }^{K}} c\left( \psi_l \right)  \psi_l(L).
    \end{align*}
\end{proof}

\section{The case \texorpdfstring{$K=O(n)$}{KO(n)}}\label{O(n) case}
	In the particular case when $K=O(n)$, the space of valuations $\Val^{O(n)}$ is spanned by the intrinsic volumes $V_i$, with $0\le i\le n$, which simplifies the result of \thref{Main}. In fact, we can prove something stronger, generalizing \thref{Hadwiger's General Integral Geometric Theorem}.
	\begin{proof}[Proof of Theorem \ref{thm: O(n) case}]
		Most of the proof is the same as in \thref{Main}. Note that in  this case $\mathfrak{p}_0= \Sym(n)$  is the space of symmetric matrices. The only difference is after proving that the integral is finite. Now we have
		\begin{align*}
		\int_{\overline{GL(n)}} & \varphi( M \cap \overline{g} L) \, dm_{\overline{GL(n)}} (\overline{g}) = \\  & = \int_{O(n)}  \int_{\Sym(n)} \int_{V}\varphi(M \cap \left(\vartheta e^X L + t\right)) d\lambda_{V}(t) d\gamma_{\Sym(n)}(X) d\nu_{O(n)}(\vartheta) = \\
		&=\int_{\overline{O(n)}} \int_{\Sym(n)} \varphi(M \cap \overline{\vartheta}e^X L) d\gamma_{\Sym(n)}(X) d\mu_{\overline{O(n)}}(\overline{\vartheta}) \\ & = 2\sum_{k=0}^{n}\varphi_{n-k}(K)\int_{\Sym(n)} V_k(e^X L) d\gamma_{\Sym(n)}(X),
		\end{align*}
		where we have used Hadwiger's General Integral Geometric  \thref{Hadwiger's General Integral Geometric Theorem}. 
        \begin{remark}\label{2 components}
        Note that the factor of 2 appears because \( O(n) \) has two connected components: one is \( SO(n) \), and the other is \( O^-(n) \), the set of matrices in \( O(n) \) with determinant \( -1 \). Since \( -I_n \in O(n) \) and \( SO(n) = -I_n O^-(n) \), and given the \( O(n) \)-invariance of the Haar measure \( \nu_{O(n)} \), it follows that \( \nu_{O(n)}(SO(n)) = \nu_{O(n)}(O^-(n)) \).
        \end{remark}
        Next, the same reasoning as in the proof of \thref{Main} shows that \[
		\Psi_j (M) =\int_{\Sym(n)} V_j(e^X M) d\gamma_{\Sym(n)}(X) \in \Val_{j}^{O(n)} 
		.\] 
		In this case, since $\Val_{j}^{O(n)}=\left<V_{j} \right>$, there is $c_j$ such that 
        \begin{equation} \label{ck}
		\int_{ \Sym(n)} V_j\left( e^{X} M \right) d\gamma_{\Sym(n)}\left( X \right) =c_j V_j\left( M \right). 
		\end{equation}
		Letting $M=B^{n}$ and recalling \eqref{intrinsic volumes of unit ball}, we obtain \[
		c_j=\frac{\int_{\Sym(n)} V_j\left( e^{X} B^n \right) d\gamma_{\Sym(n)}\left(X \right)}{\binom{n}{j}\frac{\kappa_n}{\kappa_{n-j}}}
		.\] 
	\end{proof}
	The advantage in this case is that it is possible to explicitly compute the $c_j$'s. For $j=n$ this is straightforward computation by noticing that replacing $M$ by $\left[ 0,1 \right]^{n} $ in \eqref{ck} we obtain 
	$$c_n=\int_{\Sym(n)} V_{n}\left( e^X \left[ 0,1 \right]^{n} \right) d\gamma_{\Sym(n)}\left( X \right).$$
	\begin{lemma}
    Let $\Delta_{ij}$ be the $n \times n$ matrix whose $(i,j)$-th entry is 1 and 0 elsewhere. Then the set
    $$\left\{\left(\Delta_{ii}\right)_{1 \leq i \leq n} ,\frac{1}{\sqrt{2}} \left(\Delta_{ij} + \Delta_{ji}\right)_{1 \leq i < j \leq n}\right\}$$ is an orthonormal basis of $\Sym(n)$.
	\end{lemma}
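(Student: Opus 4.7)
The plan is a direct linear-algebra verification. First I would observe that $\Sym(n)$ has dimension $\binom{n+1}{2} = n(n+1)/2$, while the proposed set has $n$ diagonal elements plus $\binom{n}{2} = n(n-1)/2$ off-diagonal elements, for a total of $n(n+1)/2$. Thus, once orthonormality is established, linear independence (and hence the basis property) follows automatically from cardinality.

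For the orthonormality computation, I would use the identity $\tr(\Delta_{ab} \Delta_{cd}) = \delta_{bc}\delta_{ad}$, which follows directly from matrix multiplication of elementary matrices. There are three cases to check with respect to the Frobenius inner product $\langle A, B\rangle_F = \tr(A^T B)$. The diagonal–diagonal case gives $\langle \Delta_{ii}, \Delta_{jj}\rangle_F = \delta_{ij}\delta_{ij} = \delta_{ij}$. The diagonal–off-diagonal case vanishes because, for $j < k$, the matrix $\Delta_{jj}$ is supported on the $(j,j)$ entry while $\Delta_{ik} + \Delta_{ki}$ has zero diagonal, so the entrywise product is identically zero. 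For the off-diagonal–off-diagonal case with $i<j$ and $k<l$, one expands
\[
\tr\bigl((\Delta_{ij} + \Delta_{ji})(\Delta_{kl} + \Delta_{lk})\bigr) = \delta_{jk}\delta_{il} + \delta_{jl}\delta_{ik} + \delta_{ik}\delta_{jl} + \delta_{il}\delta_{jk}.
\]
Under the hypothesis $i<j$ and $k<l$, the combination $\delta_{jk}\delta_{il}$ forces $j=k$ and $i=l$, which together with $i<j$ and $k<l$ is impossible; similarly $\delta_{ik}\delta_{jl}$ forces $(i,j)=(k,l)$, contributing $1$ exactly when the pairs coincide. The total is $2\delta_{(i,j),(k,l)}$, and the $\tfrac{1}{\sqrt{2}}$ normalization then produces $\delta_{(i,j),(k,l)}$.

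There is no real obstacle: the whole argument is bookkeeping with Kronecker deltas. The only subtle point is keeping track of the strict inequality $i<j$ on the indices of the off-diagonal generators, which is what prevents double-counting and what forces the $\tfrac{1}{\sqrt{2}}$ normalization to produce unit norm. Concluding, orthonormality plus the dimension count yields that the given set is an orthonormal basis of $\Sym(n)$.
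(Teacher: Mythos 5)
Your proof is correct and follows essentially the same route as the paper's: count that the set has $n(n+1)/2 = \dim \Sym(n)$ elements and then verify the inner products using $\Delta_{ab}\Delta_{cd}=\delta_{bc}\Delta_{ad}$. You are in fact slightly more careful than the published proof, which says only that "it is enough to check linear independence"; since the lemma asserts \emph{orthonormality}, directly verifying the Frobenius inner products, as you do, is the cleaner way to both establish orthonormality and get linear independence for free.
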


\begin{proof}
    Since this set has $n+\frac{n(n-1)}{2}=\frac{n(n+1)}{2}=\dim(\Sym(n))$ elements, it is enough to check linear independence. This is just computations, repeatedly using that 
    \begin{align*}
        \Delta_{ij} \Delta_{kl} =
        \left\{ \begin{array}{cl}
              \Delta_{il},& j=k \\
            0, & j\neq k
            \end{array} \right..
    \end{align*}
\end{proof}

    Therefore
	\begin{align*}
		&\int_{\Sym(n)} V_n(e^X [0,1]^n) d\gamma_{\Sym(n)}(X)=\int_{\Sym(n)} \det(e^X) d\gamma_{\Sym(n)}(X)\\ & =  \int_{\Sym(n)} e^{\tr(X)} d\gamma_{\Sym(n)}(X)  \\
		& = \left(\frac{1}{\sqrt{2\pi}} \right)^{\frac{n(n+1)}{2}} \int_{\Sym(n)} e^{\tr(X)-\frac{1}{2}\|X\|^2_F} d\rho(X)  \\
		& = \left(\frac{1}{\sqrt{2\pi}} \right)^{\frac{n(n+1)}{2}} \int_{\mathbb{R}^{\frac{n(n+1)}{2}}} e^{\tr\left(\sum a_i \Delta_{ii}+\sum b_{jk}\left(\frac{\Delta_{jk}+\Delta_{kj}}{\sqrt{2}}\right)\right)-\frac{1}{2}\|\sum a_i \Delta_{ii}+\sum b_{jk}\left(\frac{\Delta_{jk}+\Delta_{kj}}{\sqrt{2}}\right)\|^2_F} d\lambda_{\mathbb{R}^{\frac{n(n+1)}{2}}}(a,b)  \\
		&=\left(\frac{1}{\sqrt{2\pi}} \right)^{\frac{n(n+1)}{2}} \int_{\mathbb{R}^{\frac{n(n+1)}{2}}} e^{\sum \left( a_i -\frac{(a_i)^2}{2} \right) -\sum \frac{(b_{jk})^2}{2}} d\lambda_{\frac{n(n+1)}{2}}(a,b)  \\
		&=\left(\frac{1}{\sqrt{2\pi}} \right)^{\frac{n(n+1)}{2}} \left( \sqrt{2e\pi} \right)^n (\sqrt{2\pi})^{\frac{n(n+1)}{2}-n} =e^{\frac{n}{2}},
	\end{align*}
	where in the  step before the last we used the identity $\int_{\mathbb{R}}e^{-x^{2}}dx=\sqrt{\pi} $. For general $j$, since $X$ is symmetric, it has a diagonalization $X=\vartheta^{T} D(X) \vartheta$. Then, by the $O(n)$ invariance of $V_k$ and of $B^n$,
    \[	V_j\left( e^{X}B^{n} \right) =V_j\left( e^{\vartheta^{T} D\left( X \right) \vartheta } B^{n} \right)=V_j\left( e^{D\left( X \right) }B^{n} \right)   	.\] In other words, \[
	X \to V_k\left( e^{X}B^{n} \right) 
.\] is $O(n)$-conjugation invariant. By a Weyl integration formula \cite{nicolaescu2014complexity},
	\begin{equation}\label{Weil integration formula}
    \begin{aligned}
		\int_{\Sym(n)}  & V_j(e^X B^n) d\gamma_{\Sym(n)}(X) = \\
		&=\frac{1}{Z_n} \int_{\mathbb{R}^n} V_j(e^{diag(\lambda_1,\dots,\lambda_n)} B^n) e^{-\frac{1}{2} \sum_{l=1}^{n} \lambda^2_l} \prod_{l<j}|\lambda_l-\lambda_j| d\lambda_1 \dots d\lambda_n = \\
		&=\frac{1}{Z_n} \int_{\mathbb{R}^n} V_j(\mathcal{E}(e^{\lambda_1},\dots,e^{\lambda_n})) e^{-\frac{1}{2} \sum_{l=1}^{n} \lambda^2_l} \prod_{l<j}|\lambda_l-\lambda_j| d\lambda_1 \dots d\lambda_n,
	\end{aligned}
    \end{equation}
	where $Z_n=2^{\frac{n}{2}}n! \prod_{l=1}^{n} \Gamma\left( \frac{l}{2} \right)$, where $\Gamma$ is the Gamma function and 
    $$\mathcal{E}\left( a_1,\ldots,a_n \right) = \left\{  (x_1, \hdots,  x_n ) \in \mathbb{R}^n : \left(\frac{x_1}{a_1}\right)^2 + \hdots + \left(\frac{x_n}{a_n}\right)^2 \leq 1 \right\}$$
    is the ellipsoid with semiaxes $a_1,\ldots,a_n > 0$.  On the other hand, in \cite{gusakova2022intrinsic} it was shown that 
    \begin{equation}\label{volume elippsoid}
	V_j(\mathcal{E}(a_1,\dots,a_n))=\kappa_j \sum_{i=1}^d a_i^2 s_{j-1}\left(a_1^2, \ldots, a_{i-1}^2, a_{i+1}^2, \ldots, a_d^2\right) I(a_i)
	,
    \end{equation}
    where
    \[
    I(a_i)= \int_0^{\infty} \frac{t^{j-1}}{\left(a_i^2 t^2+1\right) \prod_{l=1}^d \sqrt{a_l^2 t^2+1}} dt
    \]
    and $s_{j-1}$ is the $\left( j-1 \right) $-th elementary symmetric polynomial. In our case, \[
	V_j(\mathcal{E}(e^{\lambda_1},\dots,e^{\lambda_n}))=\kappa_j \sum_{i=1}^d e^{2\lambda_i} s_{j-1}\left(e^{2\lambda_1}, \ldots, e^{2\lambda_{i-1}}, e^{2\lambda_{i+1}}, \ldots, e^{2\lambda_{n}}\right) I(e^{\lambda_i})
	.\] 
	Combining \eqref{Weil integration formula} and \eqref{volume elippsoid}, we obtain the formula for $c_j$ in terms of integrals in $\mathbb{R}$ and $\mathbb{R}^{n}$.

\printbibliography

@book{hug2020lectures,
  title={Lectures on convex geometry},
  author={Hug, Daniel and Weil, Wolfgang and others},
  volume={286},
  year={2020},
  publisher={Springer}
}

@book{klain1997introduction,
  title={Introduction to geometric probability},
  author={Klain, Daniel A and Rota, Gian-Carlo},
  year={1997},
  publisher={Cambridge University Press}
}

@book{knapp1996lie,
  title={Lie Groups beyond an Introduction},
  author={Knapp, Anthony},
  volume={140},
  year={1996},
  publisher={Springer}
}

@book{schneider2013convex,
  title={Convex bodies: the Brunn--Minkowski theory},
  author={Schneider, Rolf},
  volume={151},
  year={2013},
  publisher={Cambridge university press}
}

@article{mcmullen1977valuations,
  title={Valuations and Euler-type relations on certain classes of convex polytopes},
  author={McMullen, Peter},
  journal={Proceedings of the London Mathematical Society},
  volume={3},
  number={1},
  pages={113--135},
  year={1977},
  publisher={Oxford University Press}
}

@book{hadwiger2013vorlesungen,
  title={Vorlesungen {\"u}ber inhalt, oberfl{\"a}che und isoperimetrie},
  author={Hadwiger, Hugo},
  volume={93},
  year={2013},
  publisher={Springer-Verlag}
}

@article{alesker2007theory,
  title={Theory of valuations on manifolds: a survey},
  author={Alesker, Semyon},
  journal={Geometric and Functional Analysis},
  volume={17},
  number={4},
  pages={1321--1341},
  year={2007},
  publisher={Springer}
}

@article{gusakova2022intrinsic,
  title={Intrinsic volumes of ellipsoids},
  author={Gusakova, Anna and Spodarev, Evgeny and Zaporozhets, Dmitry},
  journal={arXiv preprint arXiv:2206.14002},
  year={2022}
}

@article{alesker2002hard,
  title={Hard Lefschetz theorem for valuations, complex integral geometry, and unitarily invariant valuations},
  author={Alesker, Semyon},
  journal={arXiv preprint math/0209263},
  year={2002}
}

@article{nicolaescu2014complexity,
  title={Complexity of random smooth functions on compact manifolds},
  author={Nicolaescu, Liviu I},
  journal={Indiana University Mathematics Journal},
  pages={1037--1065},
  year={2014},
  publisher={JSTOR}
}

@article{steiner1840parallele,
  title={{\"U}ber parallele fl{\"a}chen},
  author={Steiner, Jakob},
  journal={Monatsber. Preuss. Akad. Wiss},
  volume={2},
  pages={114--118},
  year={1840}
}

@article{schneider1999integral,
  title={Integral geometry--measure theoretic approach and stochastic applications},
  author={Schneider, Rolf},
  journal={Notes of the Advanced Course on Integral Geometry, CRM, Bellaterra},
  year={1999},
  publisher={Citeseer}
}

@incollection{bernig2011algebraic,
  title={Algebraic integral geometry},
  author={Bernig, Andreas},
  booktitle={Global differential geometry},
  pages={107--145},
  year={2011},
  publisher={Springer}
}

@article{borel1949some,
  title={Some remarks about Lie groups transitive on spheres and tori},
  author={Borel, Armand},
  journal={Bulletin of the American Mathematical Society},
  volume={55},
  number={6},
  pages={580--587},
  year={1949}
}

@article{borel1950plan,
  title={Le plan projectif des octaves et les spheres comme espaces homogenes},
  author={Borel, Armand},
  journal={Comptes Rendus Hebdomadaires des Seances de l'Academie des Sciences},
  volume={230},
  number={15},
  pages={1378--1380},
  year={1950},
  publisher={GAUTHIER-VILLARS/EDITIONS ELSEVIER 23 RUE LINOIS, 75015 PARIS, FRANCE}
}

@article{montgomery1943transformation,
  title={Transformation groups of spheres},
  author={Montgomery, Deane and Samelson, Hans},
  journal={Annals of Mathematics},
  volume={44},
  number={3},
  pages={454--470},
  year={1943},
  publisher={JSTOR}
}

@article{bernig2011hermitian,
  title={Hermitian integral geometry},
  author={Bernig, Andreas and Fu, Joseph HG},
  journal={Annals of mathematics},
  pages={907--945},
  year={2011},
  publisher={JSTOR}
}

@book{helgason1979differential,
  title={Differential geometry, Lie groups, and symmetric spaces},
  author={Helgason, Sigurdur},
  volume={80},
  year={1979},
  publisher={Academic press}
}

\end{document}